\newtheorem{thm}{Theorem }[section]
\newtheorem{lemma}[thm]{Lemma }
\newtheorem{prop}[thm]{Proposition }
\theoremstyle{definition}
\newtheorem{rem}[thm]{Remark }
\def\QQ{{\mathbb Q}}
\def\ZZ{{\mathbb Z}}
\def\FF{{\mathbb F}}
\def\fq{{\mathbb{F}_q}}
\def\kk{{\bar{k}}}
\def \bra#1\ket {\mathop{\vphantom{#1}\left<\smash{#1}\right>}\nolimits}
\DeclareMathOperator{\Hom}{Hom}
\DeclareMathOperator{\End}{End} 
\DeclareMathOperator{\Ker}{Ker} 
\DeclareMathOperator{\Mod}{mod} 
\DeclareMathOperator{\tr}{tr}
\DeclareMathOperator{\Spec}{Spec}
\DeclareMathOperator{\length}{length}
\def\Pr{\mathrm{Pr}}
\def\C{\mathcal{C}}
\def\B{\mathcal{B}}
\def\p{\mathfrak{p}}
\def\IK{\mathcal{IK}}
\def\OO{\mathcal{O}}
\def\P{\mathcal{P}}
\def\Fr{F}
\newcommand \eps {\varepsilon}
\renewcommand \phi {\varphi}
\begin{document}
\author{Sergey Rybakov}
\address{ Weizmann Institute of Science, Rehovot, Israel }

\email{rybakov.sergey@gmail.com}%
\title[Principal polarizations on abelian varieties]{Principal polarizations on products of abelian \\ varieties over finite fields}

\date{}
\keywords{abelian variety, principal polarization, Jacobian, finite field}

\subjclass{14K99, 14G05, 14G15}

\begin{abstract}
We refine and generalize the results of K. E. Lauter and E. W. Howe on principal polarizations on products of abelian varieties over finite fields. Firstly, we study the reasons for the absence of an irreducible principal polarization in the isogeny class of the product of an ordinary and a supersingular abelian variety. Secondly, we provide a necessary condition for the existence of a principal polarization on an abelian variety in the isogeny class of the product of a geometrically simple abelian surface and an elliptic curve. As an application, we prove that this abelian threefold or its quadratic twist is a Jacobian.
\end{abstract}

\maketitle

\section{Introduction}
In this paper, we assume that the ground field $k=\fq$ is a finite field of characteristic $p$ and cardinality $q$. The Jacobian $J(C)$ of a smooth projective curve $C$ over $k$ can be non-simple as an abstract abelian variety without polarization. This is the case for many interesting curves, for example, for maximal or minimal curves over finite fields.
This observation is used by K. Lauter in~\cite{KL}, where she constructs genus $3$ curves over finite fields that have the maximum or minimum number of points. 

Her work relies on the following idea of J.-P. Serre~\cite{Ser83}. If a curve $C$ over $k$ has many points (in other words, the defect is small), then there is an isogeny \[\phi:A\times B\to J(C),\] where $A$ and $B$ are abelian varieties of positive dimension.
 The pullback to $A\times B$ of the canonical polarization on $J(C)$ is a product of 
polarizations $L_A$ and $L_B$ on $A$ and $B$ respectively. Moreover, \[\deg L_A=\deg L_B=\deg\phi,\] and there is an anti-isometry $\ker L_A\cong \ker L_B$. This construction can be inverted: we can \emph{glue} two polarizations $L_A$ and $L_B$ on $A$ and $B$, if 
there is an anti-isometry $\ker L_A\cong \ker L_B$ (see Section~\ref{S3}). 
In many interesting cases it is possible to prove that there is no curve over a given finite field with a given number of points using the fact that the Jacobian of such a curve has to be the gluing of polarizations on two abelian varieties of positive dimension.  

For example, in some situations there is no irreducible principal polarization
in the isogeny class of the product of an ordinary and a supersingular abelian variety.
In this paper, we study the reasons for this in more detail;
in particular, we reprove and clarify the theorem of K. Lauter and E. Howe~\cite[Theorem 3.1]{HL}. 

The gluing construction is also used for a complete classification of zeta functions of curves of genus $2$ obtained in~\cite{HNR06}, where the results of~\cite{Ru} and \cite{How95} are used. In this paper, we extend this approach to the curves of genus $3$. By~\cite[Theorem 1.2]{How96}, if an abelian variety of dimension $3$ is simple, then it always has a principal polarization. We are interested in a less general case where an abelian variety is isogenous to a product of a geometrically simple abelian surface and an elliptic curve. 

In this case, we encounter the phenomenon of exceptional prime numbers. In Section~\ref{ExPrimes} we study them in full generality, but for geometrically simple abelian surfaces, they can be defined using Lemma~\ref{lemma1}.(1) as follows.

Let $A$ be a geometrically simple abelian surface with the Weil polynomial $f_A$, and the real Weil polynomial $h$ (see section~\ref{intro} for the definition of Weil polynomials). The endomorphism algebra $K=\End^\circ(A)$ of $A$ is a CM-field, and the real subfield $K^+$ of $K$ is isomorphic to the quadratic extension $\QQ[t]/h(t)\QQ[t]$. 
We say that a prime $\ell\in\ZZ$ is \emph{exceptional} (with respect to $A$) if 
\begin{itemize}
\item $f_A(t)\equiv f(t)^2\bmod\ell^2$, for some irreducible $f\in\ZZ_\ell[t]$;
\item $\ell$ is inert in $K^+$.
\end{itemize}

Let $B$ be an elliptic curve over $k$.
According to Remark~\ref{ExRem}, if $\ell$ is exceptional, then for any isogeny $A'\to A$, and any polarization $L$ on $A'$ \[\ker L\not\cong B[\ell].\] However, by Proposition~\ref{prop1}, in this situation the gluing of polarizations is still possible, at least if $A$ is ordinary. That is, there exist isogeny $A'\to A$ and a polarization $L$ on $A'$ such that $\ker L\cong B[\ell^2]$.

If the Weil polynomial $f_B$ of the elliptic curve $B$ is irreducible, then $\End^\circ(B)$ is a quadratic extension of $\QQ$. Denote by $\Delta_B$ the discriminant of $\End^\circ(B)$.
We are now ready to formulate the main result of the paper.

\begin{thm}\label{thm_f}
Let $A$ be a geometrically simple abelian surface with real Weil polynomial $h$.
Suppose that $B$ is an elliptic curve with irreducible Weil polynomial $f_B(t)=t^2-bt+q$.
Assume that there is a prime $\ell$ that divides $h(b)$ and such that the following conditions hold:
\begin{itemize}
\item $\Delta_B\neq -\ell$;
\item if $f_B(t)\equiv(t-t_1)^2\bmod\ell$, then $\ell^2$ divides $f_B(t_1)$;
\item if $\ell$ is exceptional, then $A$ is ordinary.
\end{itemize}
Then $A\times B$ is isogenous to an abelian variety with irreducible principal polarization.

Vice versa, if $A\times B$ is isogenous to an abelian variety with irreducible principal polarization, then $h(b)\neq\pm 1$.
\end{thm}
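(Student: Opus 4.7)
I will attack both directions via the Serre gluing construction, which reduces the existence of an irreducible principal polarization on some $X$ isogenous to $A\times B$ to the existence of isogenies $A'\to A$, $B'\to B$, polarizations $L_A$ on $A'$ and $L_B$ on $B'$ of equal degree, and an anti-isometry $\ker L_A\cong \ker L_B$. Given such data, the glued variety $X$ is principally polarized; the polarization is irreducible iff the gluing isogeny $A'\times B'\to X$ is nontrivial, and since $\Hom(A',B')=0$ (as $A$ is geometrically simple of dimension two and $B$ is an elliptic curve), one cannot have $X\cong X_1\times X_2$ as polarized varieties with both factors of positive dimension other than through this product structure.

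For the forward direction I will build the gluing data $\ell$-locally, using the hypothesis $\ell\mid h(b)$ together with the identity $f_A(\beta)=\beta^{2}h(b)$ (where $\beta$ is a root of $f_B$). Split into two cases. If $\ell$ is not exceptional, I will use the structural results summarized in Lemma~\ref{lemma1} to produce an isogeny $A'\to A$ and a polarization $L_A$ on $A'$ with $\ker L_A\cong B[\ell]$ as a group scheme. The hypotheses $\Delta_B\neq -\ell$ and ``$\ell^2\mid f_B(t_1)$ when $f_B\equiv (t-t_1)^2\bmod \ell$'' then guarantee that on some isogenous elliptic curve $B'$ there exists a polarization $L_B$ with kernel $B[\ell]$ equipped with an anti-isometry to $\ker L_A$; these two conditions are exactly what rule out, respectively, the ramified case that would force the wrong symmetry on $\ker L_B$, and the split-with-no-room-to-move case at $\ell$. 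If $\ell$ is exceptional then by hypothesis $A$ is ordinary, and I invoke Proposition~\ref{prop1} to get $\ker L_A\cong B[\ell^2]$; the matching $L_B$ on a suitable $B'$ is constructed identically, using the same conditions at $\ell$.

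For the converse, suppose $X\sim A\times B$ admits an irreducible principal polarization $\lambda$. Pick an isogeny $\psi:A'\times B'\to X$ with $A'\sim A$ and $B'\sim B$. Because $\Hom(A',B')=0$, every polarization on $A'\times B'$ splits as a product, so $\psi^{*}\lambda=L_{A'}\boxtimes L_{B'}$ with $\deg L_{A'}=\deg L_{B'}=\deg\psi$, and $\ker\psi$ is the graph of an anti-isometry $\ker L_{A'}\cong \ker L_{B'}$. Irreducibility of $\lambda$ forces $\deg\psi>1$, so some prime $\ell$ divides $\deg L_{A'}$, yielding an anti-isometry of nontrivial $\ell$-primary subgroup schemes inside $A'[\ell^{\infty}]$ and $B'[\ell^{\infty}]$. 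Comparing the Frobenius action on the two sides, a common eigenvalue modulo $\ell$ must appear; in terms of the real Weil polynomials this says $b\equiv \alpha_i+q/\alpha_i\pmod{\ell}$ for some $i$, i.e.\ $\ell\mid h(b)$, and in particular $h(b)\neq\pm 1$.

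\textbf{Main obstacle.} The hard part is the forward direction: matching the precise group-scheme-with-pairing structure of $\ker L_A$ (for some isogenous $A'$) with that of a subgroup of $B'[\ell^{\infty}]$ for some $B'$ isogenous to $B$. One must control local orders in $\End^{\circ}(B)$ at $\ell$ (where $\Delta_B\neq -\ell$ and the condition on $f_B$ modulo $\ell^2$ intervene) and simultaneously the local structure of $A[\ell^{\infty}]$ as a module over the $\ell$-adic completion of $\End^{\circ}(A)$; the exceptional-prime case, where the naive construction fails and one is forced to pass to kernel $B[\ell^{2}]$, is the subtlest and is precisely what Proposition~\ref{prop1} was set up to handle in the ordinary regime.
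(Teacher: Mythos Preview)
Your overall architecture matches the paper's: Serre gluing, a case split on whether $\ell$ is exceptional, Proposition~\ref{prop1} in the exceptional case, Lemma~\ref{lemma2} (via $\Delta_B\neq -\ell$) to adjust the pairing to an anti-isometry, and Lemma~\ref{glue} for irreducibility. Your converse sketch is also correct and is essentially what the paper has in mind (it is the content of Proposition~\ref{hb} together with the Howe--Lauter decomposition of an irreducible principal polarization).

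There is, however, a genuine gap in the forward direction. Your two-case split (exceptional / non-exceptional) is too coarse, and you have misidentified where the hypothesis ``if $f_B\equiv (t-t_1)^2\bmod\ell$ then $\ell^2\mid f_B(t_1)$'' enters. First, Lemma~\ref{lemma1} only treats the situation $f_A\equiv f^2\bmod\ell$ with $f$ irreducible; it says nothing when $f_B$ splits modulo $\ell$, and even in the irreducible case with $f_A\not\equiv f_B^2\bmod\ell$ the paper needs a separate Hensel/Dedekind argument (to see $R$ is maximal at the relevant prime, hence Remark~\ref{rem2} applies). Second, and more seriously, attainability is a statement in the Grothendieck group $G(\Mod_R)$: it yields an $A'$ and a polarization $L$ with $\eps([\ker L])=[B[\ell]]_R$, \emph{not} an isomorphism of group schemes $\ker L\cong B[\ell]$. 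When $f_B\equiv (t-t_1)^2\bmod\ell$ the Frobenius action on $\ker L(\kk)$ could be scalar or a nontrivial Jordan block, and you have no a priori control over which. The hypothesis $\ell^2\mid f_B(t_1)$ is used exactly here, and on the $B$ side rather than on the polarization: by the Dedekind criterion it forces $R_B\otimes\ZZ_\ell$ to be \emph{non-maximal}, so that both Jordan types are realized as $B_i[\ell]$ for two different isogenous curves $B_1,B_2$ (one with Tate module the maximal order, one with Tate module $R_B\otimes\ZZ_\ell$). Whichever structure $\ker L$ has, some $B_i[\ell]$ matches it as a group scheme, and only then can you glue. Your description of this hypothesis as ruling out a ``split-with-no-room-to-move case'' has the logic inverted.

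You also omit the case $\ell=p$, which the paper treats separately via the \'etale/multiplicative decomposition of $B[p]$ when $B$ is ordinary and via $\alpha_p$ when $B$ is supersingular.
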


To an abelian threefold with irreducible principal polarization one can apply
 the following result due to Serre based on the Torelli Theorem and~\cite{OU}. 

\begin{thm}\cite[Section 7]{KL1}\label{OUS}
Let $A$ be an abelian variety of dimension $3$ over $k$ with a principal polarization. 
Assume that the polarization is irreducible over an algebraic closure of $k$.
  Then $A$ or its quadratic twist {\rm(}corresponding to $-1$ automorphism{\rm)} is the Jacobian of a smooth curve over $k$.
\end{thm}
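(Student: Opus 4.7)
The plan is to combine the Oort--Ueno theorem with Torelli's theorem and Galois descent, exploiting that the base field is finite.

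First I would apply the main result of~\cite{OU} over $\bar k$: since $\bar L$ remains irreducible, there exist a smooth projective genus-$3$ curve $\bar C$ over $\bar k$ and an isomorphism $\phi : (J(\bar C), \Theta_{\bar C}) \xrightarrow{\sim} (\bar A, \bar L)$.

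Next I would transport the canonical descent datum $\alpha_\sigma : \sigma^*(\bar A, \bar L) \xrightarrow{\sim} (\bar A, \bar L)$ attached to the $k$-form $(A, L)$ through $\phi$, yielding the cocycle $\gamma_\sigma := \phi^{-1} \alpha_\sigma\, \sigma^*\phi$ on the polarized Jacobian. By the strong form of Torelli, each such isomorphism $\gamma_\sigma$ is of the form $\eta(\sigma)\, J(\beta_\sigma)$ for a sign $\eta(\sigma) \in \{\pm 1\}$ and a curve isomorphism $\beta_\sigma : \sigma^*\bar C \to \bar C$ (well-defined up to $\mathrm{Aut}(\bar C)$). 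The cocycle identity $\gamma_{\sigma\tau} = \gamma_\sigma\, \sigma^*\gamma_\tau$, together with the fact that in the non-hyperelliptic case the map $\mathrm{Aut}(\bar C) \hookrightarrow \mathrm{Aut}(J(\bar C), \Theta_{\bar C})$ is injective with image disjoint from $-1$, forces $\eta : G \to \{\pm 1\}$ to be a character and $\beta_{\sigma\tau} = \beta_\sigma\, \sigma^*\beta_\tau$. In the hyperelliptic case the hyperelliptic involution $\iota$ satisfies $\iota^* = -1$ on the Jacobian, so the sign can be absorbed into $\beta_\sigma$ and one may take $\eta \equiv 1$. In either case $(\beta_\sigma)_{\sigma \in G}$ is a genuine descent datum on $\bar C$, producing a smooth projective curve $C$ over $k$.

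Unwinding the identifications, the descent of $(J(\bar C), \Theta_{\bar C})$ via $J(\beta_\sigma)$ corresponds under $\phi$ to the descent of $(\bar A, \bar L)$ via the twisted cocycle $\eta(\sigma)\, \alpha_\sigma$, which is precisely the twist of $(A, L)$ by the quadratic character $\eta$. Since $G = \Gal(\bar k / k) \cong \widehat{\ZZ}$, the group $H^1(G, \{\pm 1\}) \cong \ZZ/2\ZZ$ has exactly one nontrivial element, corresponding to the $(-1)$-twist $A^{(-1)}$. Hence $J(C)$ equals $A$ (when $\eta$ is trivial) or $A^{(-1)}$ (when $\eta$ is the nontrivial quadratic character), and the theorem follows. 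The main obstacle is showing that the signs $\eta(\sigma)$ assemble into a group homomorphism rather than a mere set-theoretic map; this is exactly where Torelli's injectivity in the non-hyperelliptic case is invoked, ruling out $\eta(\sigma\tau) \neq \eta(\sigma)\eta(\tau)$ via a comparison of $J(\beta_{\sigma\tau})$ and $J(\beta_\sigma\, \sigma^*\beta_\tau)$ within the two $\mathrm{Aut}(\bar C)$-cosets of $\mathrm{Aut}(J(\bar C), \Theta_{\bar C})$.
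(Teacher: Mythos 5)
Your proposal is correct and is essentially the argument the paper relies on: the theorem is quoted from Serre's appendix \cite[Section 7]{KL1} and is explicitly said to rest on the Torelli theorem and \cite{OU}, which is exactly your route (Oort--Ueno over $\bar k$ to produce $\bar C$, strong Torelli to split the descent cocycle into a sign character $\eta$ and a curve cocycle $\beta_\sigma$, effective Galois descent, and $H^1(\Gal(\bar k/k),\{\pm1\})\cong\ZZ/2\ZZ$ identifying the nontrivial twist with the $(-1)$-twist). No substantive gap to report.
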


From Theorem~\ref{thm_f} and Theorem~\ref{OUS} we immediately get the following result.

\begin{thm}
\label{3curv} Let $A$ be a geometrically simple abelian surface with a real Weil polynomial $h$. 
Suppose that $B$ is an elliptic curve with irreducible Weil polynomial $f_B(t)=t^2-bt+q$. Assume that there is a prime $\ell$ that divides $h(b)$ and such that the following conditions hold:
\begin{itemize}
\item $\Delta_B\neq -\ell$;
\item if $f_B(t)\equiv(t-t_1)^2\bmod\ell$, then $\ell^2$ divides $f_B(t_1)$;
\item if $\ell$ is exceptional, then $A$ is ordinary.
\end{itemize}
  Then the isogeny class $A\times B$ or its quadratic twist {\rm(}corresponding to $-1$ automorphism{\rm)}	contains the Jacobian of a smooth curve of genus $3$. 
Vice versa, if $A\times B$ is isogenous to a Jacobian, then $h(b)\neq\pm 1$.
\end{thm}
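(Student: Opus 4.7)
The plan is to deduce this essentially formally from Theorem~\ref{thm_f} and Theorem~\ref{OUS}; the content of the theorem is entirely in the gluing input coming from Theorem~\ref{thm_f}, so no new geometric work is needed here.

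For the forward direction, I would first invoke Theorem~\ref{thm_f}: the hypotheses are identical, so under the assumptions on $\ell$, $\Delta_B$, $f_B$, and the ordinarity of $A$ in the exceptional case, there exists an abelian variety $X$ in the isogeny class of $A\times B$ equipped with an irreducible principal polarization. Since $X$ has dimension $3$, Theorem~\ref{OUS} applies and shows that either $X$ or its quadratic twist $X^{(-1)}$ is the Jacobian of a smooth curve over $k$; this curve automatically has genus $3$ because its Jacobian is three-dimensional. Since isogeny commutes with the quadratic twist (the $-1$ automorphism acts on isogenies between abelian varieties), $X^{(-1)}$ lies in the isogeny class of $(A\times B)^{(-1)} = A^{(-1)}\times B^{(-1)}$, which is the quadratic twist of the isogeny class of $A\times B$. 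This gives the desired conclusion.

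For the converse, suppose $A\times B$ is isogenous to a Jacobian $J(C)$ of a smooth curve $C$ over $k$. Then $J(C)$ is an abelian variety in the isogeny class of $A\times B$, and it carries the canonical principal polarization coming from the theta divisor. I would then observe that, because $C$ is smooth and connected, the theta divisor of $J(C)$ is geometrically irreducible; equivalently, the canonical principal polarization of $J(C)$ remains irreducible over an algebraic closure of $k$. Hence $J(C)$ witnesses the fact that $A\times B$ is isogenous to an abelian variety with an irreducible principal polarization, and the converse half of Theorem~\ref{thm_f} forces $h(b)\neq \pm 1$.

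The step I would expect to be the only point requiring any care is the verification that the canonical principal polarization on the Jacobian of a smooth irreducible curve is irreducible in the sense used in Theorem~\ref{thm_f} (i.e., does not decompose as a product after base change to $\overline{k}$); this is a standard fact about Jacobians that I would cite rather than reprove. Beyond that, the argument is a bookkeeping combination of the two stated theorems and the compatibility of isogeny with quadratic twisting.
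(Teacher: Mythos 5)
Your proposal matches the paper's proof, which consists of nothing more than combining Theorem~\ref{thm_f} with Theorem~\ref{OUS} (the paper explicitly calls the deduction immediate), and your treatment of the converse via the indecomposability of the canonical polarization on a Jacobian is the intended reading. The only point worth adding is that the geometric-irreducibility issue you flag for the converse also arises in the forward direction---Theorem~\ref{OUS} requires irreducibility over $\kk$, while Theorem~\ref{thm_f} asserts irreducibility over $k$---but since $A$ is geometrically simple the argument of Lemma~\ref{glue} applies after base change to $\kk$ as well, so this is harmless.
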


{\bf Acknowledgments}
The author thanks the anonymous reviewer for many useful remarks on the paper.
The author is grateful to the Weizmann Institute Emergency Program to Host Visiting Scientists Affected by the War in Ukraine.

\section{Preliminaries}\label{intro}
\subsection{Endomorphism algebras of abelian varieties}
Let $A$ and $B$ be abelian varieties over $k$, and let $\Hom(A,B)$ be the group of
homomorphisms from $A$ to $B$ over $k$. The group $\Hom(A,B)$ is finitely generated and
torsion-free and $\End(A)=\Hom(A,A)$ has a ring structure with
composition as multiplication. We will use the following notation:
\[\Hom^{\circ}(A,B)=\Hom(A,B)\otimes_{\ZZ}\QQ\text{, and }
\End^{\circ}(A)=\End(A)\otimes_{\ZZ}\QQ.\] We will call an abelian
variety $A$ {\it simple} if it does not contain nontrivial
abelian subvarieties. We will call an abelian
variety $B$ {\it isotypic} if there exists a simple
abelian variety $A$ such that $B$ is isogenous to $A^r$ for some $r$.
Any abelian variety $A$ over $k$ is isogenous to a product of isotypic abelian varieties $A_i$.
This decomposition corresponds to the decomposition of $\End^{\circ}(A)$ into a product of simple algebras
$\End^{\circ}(A_i)$. In particular, $\End^{\circ}(A)$ is a
semi-simple $\QQ$-algebra~\cite[IV.19. Corollaries 1 and 2]{Mum}.

An element $\phi\in\End(A)$ is called an \emph{isogeny} if $\phi$ is finite and surjective.
The kernel $\ker\phi$ of an isogeny $\phi$ is a finite group scheme; the \emph{degree} $\deg\phi$ of an isogeny is defined to be the order of its kernel. Note that since the kernel could be non-reduced, the order of $\ker L$ is not the same as the order of its group of points $\ker L(\kk)$. If $\phi\in\End(A)$ is not an isogeny, we put $\deg\phi=0$. 
Then $\deg$ is a homogeneous polynomial function $\deg:\End^{\circ}(A) \to \QQ$ of degree
$2\dim A$~\cite[IV.19.Theorem 2]{Mum}, i.e., for any $v_0, v_1\in \End^{\circ}(A)$ the function
$\deg(x_0v_0+x_1v_1)$ is a polynomial in $x_0$ and $x_1$ of degree
$2\dim A$.

An important example of an isogeny is \emph{the arithmetic Frobenius morphism} $F_A: A\to A$. By definition, $F_A$ is trivial on schematic points and raises functions to their $q$-th powers. It is known that $F_A$ is an isogeny of degree $q^{\dim A}$~\cite[Page 205]{Mum}.

Let $A[m]$ be the group subscheme of $A$ annihilated by $m$. 
Fix a prime number $\ell\neq p$. The Frobenius morphism induces an action on the group 
$A[\ell^r](\kk)$, on the Tate module \[T_\ell(A)=\varprojlim_r A[\ell^r](\kk),\] and on
$V_\ell(A)=T_\ell(A)\otimes_{\ZZ_\ell}\QQ_\ell$. 
The following theorem goes back to Weil.

\begin{thm}\label{weil2}
The module $T_\ell(A)$ is a free $\ZZ_\ell$-module of rank $2\dim A$.
The arithmetic Frobenius morphism $\Fr_A$ induces on $V_\ell(A)$ a semisimple linear operator $\Fr$. Its characteristic polynomial $f_A(t)=\det(t-\Fr)$ is a monic polynomial of degree $2\dim A$ with integer coefficients. It does not depend on the choice of $\ell$. 
For any $n\in\ZZ$ we have $f_A(n)=\deg(n-\Fr)$. 
If $\pi$ is a root of $f_A$, then $f_A(q/\pi)=0$, and $|\pi|=\sqrt{q}$.  \qed
\end{thm}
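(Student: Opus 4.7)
The plan is to establish the four assertions of Theorem~\ref{weil2} in sequence, following the classical development of~\cite{Mum}. For the Tate module, since $\ell\neq p$, multiplication by $\ell^r$ on $A$ is an \'etale isogeny of degree $\ell^{2r\dim A}$, so an induction on $r$ gives $A[\ell^r](\kk)\cong(\ZZ/\ell^r)^{2\dim A}$; taking the inverse limit yields the freeness of $T_\ell(A)$ of the claimed rank.

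For the characteristic polynomial the key ingredient is the identity $\deg\phi=\det V_\ell(\phi)$ for every $\phi\in\End(A)$. Both sides are multiplicative polynomial functions of degree $2\dim A$ on $\End^\circ(A)$ that agree on multiplication-by-$n$ maps (where they both equal $n^{2\dim A}$), and a Zariski-density argument forces global equality. Applied to $\phi=n-\Fr$ this gives $f_A(n)=\deg(n-\Fr)\in\ZZ$, so the monic polynomial $f_A$ takes integer values at every integer, which by the standard finite-difference argument forces integer coefficients; independence of $\ell$ is immediate from the intrinsic right-hand side. Semisimplicity of $\Fr$ on $V_\ell(A)$ I would invoke from Tate's theorem for abelian varieties over finite fields, or equivalently from the fact that $\Fr$ admits a Rosati conjugate $\Fr^\dagger=q/\Fr$ and so generates a commutative separable subalgebra of $\End^\circ(A)\otimes\QQ_\ell$. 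The functional equation $f_A(q/\pi)=0$ follows from the Weil pairing: a polarization $\lambda:A\to A^\vee$ produces a non-degenerate alternating form $V_\ell(A)\times V_\ell(A)\to\QQ_\ell(1)$ on which $\Fr$ acts by the scalar $q$ on $\QQ_\ell(1)$, so whenever $\pi$ is a Frobenius eigenvalue, $q/\pi$ is also one.

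The hard part, which I expect to be the main obstacle, is the Riemann hypothesis bound $|\pi|=\sqrt q$. For this I would follow the Weil--Mumford positivity argument: fix a polarization, so that the Rosati involution $\dagger$ on $\End^\circ(A)\otimes\RR$ satisfies $\Tr(\phi^\dagger\phi)>0$ for every nonzero $\phi$. Using the relation $\Fr\cdot\Fr^\dagger=q$ and passing to the commutative subring generated by $\Fr$ and $\Fr^\dagger$, positivity of the trace form forces every real embedding of the totally real element $\Fr+q/\Fr$ into $\RR$ to lie in the interval $[-2\sqrt q,2\sqrt q]$. Combined with the relation $\pi\bar\pi=q$ at each complex place, this pins down $|\pi|=\sqrt q$ for every root $\pi$ of $f_A$. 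All of the remaining statements reduce to formal bookkeeping once this positivity estimate is in hand.
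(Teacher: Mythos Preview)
Your proposal is correct and follows the classical Weil--Mumford argument; the paper's own proof is simply a citation to \cite[IV.19, Theorem~4]{Mum} and \cite[IV.21, Theorem~4]{Mum}, so you are expanding precisely the content of those references. The only difference is that the paper treats the result as a black box, whereas you have sketched the mechanism (degree--determinant identity, Rosati positivity for the Riemann hypothesis, Weil pairing for the $\pi\mapsto q/\pi$ symmetry), which is exactly what one finds when unpacking Mumford's chapters.
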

\begin{proof}
  Combine~\cite[IV.19. Theorem 4]{Mum}, and~\cite[IV.21. Theorem 4]{Mum}.
\end{proof}

The polynomial $f_A$ is called \emph{the Weil polynomial of $A$}.
The key result of the Tate--Honda theory states that
abelian varieties $A$ and $B$ over $k$ are isogenous if and only if $f_A=f_B$~\cite{Ta66}.
If $A$ is simple, then $f_A$ is irreducible; in particular, $f_A$ is uniquely determined by any root $\pi_A$. Thus, the isogeny class of a simple abelian variety is determined by $\pi_A$.

The polynomial $f_A$ uniquely determines the endomorphism algebra of the abelian variety.
For example, the endomorphism algebra of a non-supersingular elliptic curve $B$
is the imaginary quadratic extension $\QQ[t]/f_B(t)\QQ[t]$ of $\QQ$~\cite{Wa}.
If a simple abelian surface $A$ is ordinary or mixed, then its endomorphism algebra
is a CM-field and is isomorphic to $\QQ(\pi)=\QQ[t]/f_A(t)\QQ[t]$~\cite{Ta66}.

The roots of $f_A$ are called the {\it Weil numbers}. If $\pi$ is a Weil number, then $q/\pi$ is a Weil number by Theorem~\ref{weil2}. Therefore, \[f_A(t)=\prod_{i=1}^{\dim A}(t-\pi_i)(t-q/\pi_i)\] for some Weil numbers $\pi_1,\dots, \pi_{\dim A}$. 
Note that $\pi+q/\pi$ is a real algebraic number. We say that \[h_A(t)=\prod_{i=1}^{\dim A}(t-\pi_i-q/\pi_i)\] is \emph{the real Weil polynomial of $A$}. 
                                                              
We need the following lemma, which allows us to construct
new abelian varieties within a given isogeny class.

\begin{lemma}~\cite[IV.2.3]{Milne}\label{k_iso_lemma}
Let $\phi:B\to A$ be an isogeny. Then $T_\ell(\phi):T_\ell(B)\to T_\ell(A)$
is a $\ZZ_\ell$--linear embedding commuting with the Frobenius action,
whose image generates $V_\ell(A)$.

Conversely, for each $\ZZ_\ell$--submodule $T\subset T_\ell(A)$, invariant
under the Frobenius morphism and generating $V_\ell(A)$, there exist
an abelian variety $B$ and an isogeny $\phi:B\to A$ over $k$ such that
$T_\ell(\phi):T_\ell(B)\to T$ is an isomorphism.
\end{lemma}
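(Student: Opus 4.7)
I would treat the two directions separately; the first is formal, and the second is the substantive construction.

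For the forward direction, I would use functoriality of $T_\ell$: the map $T_\ell(\phi)$ automatically commutes with Frobenius. For injectivity, if $(b_r)\in T_\ell(B)$ maps to zero, then every $b_r$ lies in $\ker\phi(\kk)$, which is finite since $\ker\phi$ is a finite group scheme and $\ell\neq p$; fixing $n$ so that $\ell^n$ annihilates this group, the compatibility $b_r=\ell^s\cdot b_{r+s}$ forces $b_r=0$ for all $r$. Both Tate modules are free $\ZZ_\ell$-modules of rank $2\dim A$ by Theorem~\ref{weil2}, so the injection has finite cokernel and becomes an isomorphism after tensoring with $\QQ_\ell$.

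For the converse, I would build $B$ explicitly. Because $T$ generates $V_\ell(A)$, it is a $\ZZ_\ell$-lattice of rank $2\dim A$, hence of finite index in $T_\ell(A)$, so some $n$ satisfies $\ell^n T_\ell(A)\subset T$. Using the canonical identification $T_\ell(A)/\ell^n T_\ell(A)=A[\ell^n](\kk)$, I define $H\subset A[\ell^n]$ to be the étale (since $\ell\neq p$) subgroup scheme whose $\kk$-points are $T/\ell^n T_\ell(A)$; Frobenius-invariance of $T$ ensures $H$ descends to $k$. Setting $B=A/H$ with quotient isogeny $\pi:A\to B$, the inclusion $H\subset A[\ell^n]$ implies that $[\ell^n]_A$ kills $H$ and so factors uniquely as $[\ell^n]_A=\phi\circ\pi$ for an isogeny $\phi:B\to A$.

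It remains to show $T_\ell(\phi)(T_\ell(B))=T$. I would identify $V_\ell(B)$ with $V_\ell(A)$ via the isomorphism $V_\ell(\pi)$ and locate $T_\ell(B)$ inside $V_\ell(A)$. Applying the snake lemma to the two short exact sequences $0\to T_\ell(X)\to V_\ell(X)\to X[\ell^\infty](\kk)\to 0$ for $X=A$ and $X=B$, joined by the vertical maps $T_\ell(\pi)$, $V_\ell(\pi)$ (an isomorphism), and $\pi$ (surjective on $\ell$-power torsion with kernel $H$), shows that $T_\ell(\pi)$ is injective with cokernel $H$. Hence, under the identification, $T_\ell(B)$ corresponds to $\ell^{-n}T$. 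Finally, the relation $\phi\circ\pi=[\ell^n]$ gives $V_\ell(\phi)=\ell^n\cdot V_\ell(\pi)^{-1}$, which under the identification is multiplication by $\ell^n$, so $T_\ell(\phi)(T_\ell(B))=\ell^n\cdot\ell^{-n}T=T$.

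The main obstacle is this last identification step: one must track how the lattice $T_\ell(B)$ sits inside $V_\ell(A)$. The snake lemma seems the cleanest route; a direct approach via $B[\ell^r](\kk)=\{a\in A(\kk):\ell^r a\in H\}/H$ and passing to the inverse limit yields the same answer, but makes the Frobenius-equivariant identification harder to see.
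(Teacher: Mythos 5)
Your argument is correct; the paper itself gives no proof of this lemma, citing it to Milne's notes, and your construction (descend the Frobenius-stable subgroup $H$ with $H(\kk)=T/\ell^n T_\ell(A)$, pass to $B=A/H$, and factor $[\ell^n]_A=\phi\circ\pi$) is precisely the standard argument behind that reference. The bookkeeping in your final identification step — $T_\ell(B)=\ell^{-n}T$ inside $V_\ell(A)$ and $V_\ell(\phi)$ acting as multiplication by $\ell^n$ — checks out.
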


\subsection{Finite subschemes of abelian varieties}\label{fin_subsch}
In this section we recall some results on finite group schemes from~~\cite{How96}.
We fix an isogeny class $\C$ of abelian varieties of dimension $g$.
We have a corresponding endomorphism algebra $\End^\circ(\C)$ and an integer element $F\in\End^\circ(\C)$
such that for every $A$ in $\C$ there is an isomorphism $i_A:\End^\circ(\C)\cong\End^{\circ}(A)$ such that $i_A(F)=F_A$. 
Consider a subring $R=\ZZ[F,V]$ in $\End^{\circ}(\C)$  generated over $\ZZ$ by $F$
 and a shift (Verschiebung) $V=q/F$. 
 The algebra $K=R\otimes\QQ$ is a product of real or CM-fields;
 therefore, the complex conjugation defines an involution $r\mapsto\bar r$ on $K$.
Moreover, $\Bar F=V$; therefore, $R$ is invariant under this involution.

If $\ell\neq p$, the group scheme $A[\ell]$ is reduced and is uniquely determined by the Frobenius action on the vector space $A[\ell](\kk)$ of dimension $2\dim A$ over $\FF_\ell$. 
By Theorem~\ref{weil2}, the characteristic polynomial
of this action on $A[\ell](\kk)$ is equal to $f_A(t)\bmod\ell$.
Note that the Frobenius action is not always semi-simple on $A[\ell](\kk)$, and
 $A[\ell]$ is not always uniquely determined by the polynomial $f_A$.
This issue can be avoided using Grothendieck groups.

Let $\IK$ be the full subcategory of the category of finite commutative
group schemes $\Delta$ such that there exists a monomorphism from $\Delta$ to some abelian variety from $\C$. 
In other words, $\IK$ is the category of kernels of isogenies between varieties in $\C$.
  The category $\IK$ is a product of four subcategories $\IK_{rr}$,
$\IK_{rl}$, $\IK_{lr}$, and $\IK_{ll}$, where the first index means that
objects of these categories are respectively reduced or local, and
the second means the same for the Cartier dual of each object.

For a finite $\ZZ$-algebra $S$ we denote by $\Mod_S$ the category of finite $S$-modules.
There is a natural functor $\P$ from $\IK_{rr}\oplus\IK_{rl}$ to $\Mod_R$ given by $X\mapsto X(\kk)$, where the $R$-module structure on $X(\kk)$ is given by the action of Frobenius and Verschiebung on $X$.
The functor $\P$ can be extended to $\IK_{lr}$ as follows.
If $M$ is a finite $R$-module, then \[\widehat M= \Hom_\ZZ(M, \QQ/\ZZ)\] is naturally an $R$-module:
 for every $r\in R$ and $\psi\in \widehat M$ we set \[(r\psi)(m)=\psi(\bar r m)\] for all $m\in M$.
For a local group scheme $X$ with reduced Cartier dual scheme $D(X)$ we define $\P(X)$ by the formula: \[\P(X)=\widehat{D(X)(\kk)}. \]

The Grothendieck group $G(\IK)$ of $\IK$ is defined as the quotient of the free abelian group
generated by isomorphism classes of objects in $\IK$ by the subgroup generated by the expressions $Y-X-Z$ for any exact sequence
\[0\to X\to Y\to Z\to 0\] in $\IK$. Denote by $[X]$ an element of $G(\IK)$ corresponding to $X$. 
For an $R$-module $M$ we denote by $[M]_R$ the corresponding element of the Grothendieck group $G(\Mod_R)$ of the category of
finite $R$-modules.

There is a natural homomorphism of Grothendieck groups \[\eps:G(\IK)\to G(\Mod_R).\]
On $\IK_{rr}\oplus\IK_{rl}\oplus \IK_{lr}$ the morphism $\eps$ is induced by $\P$. 
In the remaining case, when $X$ and its Cartier dual $D(X)$ are local, we define $\eps$ as follows.
The category $K_{ll}$ (if it is not empty) is generated by a single simple object
$\alpha_p$. This is the group scheme $\Spec k[t]/(t^p)$ with co-multiplication
$t\mapsto t\otimes 1 + 1\otimes t$. 
Therefore, it is enough to define $\eps([\alpha_p])$.
The corresponding $R$-module is the abelian group $\ZZ/p\ZZ$ equipped with the trivial action of $F$ and $V$.

\begin{thm}~\cite[Theorem 3.1]{How96}
The morphism $\eps:G(\IK)\to G(\Mod_R)$ is a well-defined isomorphism.\qed
\end{thm}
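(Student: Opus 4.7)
The plan is to exploit the decomposition $\IK=\IK_{rr}\oplus\IK_{rl}\oplus\IK_{lr}\oplus\IK_{ll}$ and the corresponding decomposition of $G(\Mod_R)$ according to the classification of maximal ideals of $R$. First I would check that $\eps$ is well-defined on each factor. On $\IK_{rr}$ and $\IK_{rl}$ the schemes are étale, so $X\mapsto X(\kk)$ preserves orders and is exact, hence descends to the Grothendieck group. On $\IK_{lr}$ Cartier duality is exact and lands in étale schemes, and Pontryagin duality $M\mapsto\widehat M$ is exact on finite abelian groups, so the composition defining $\eps$ is exact. On $\IK_{ll}$ the unique simple object is $\alpha_p$, so $G(\IK_{ll})\cong\ZZ\cdot[\alpha_p]$ and $\eps$ extends uniquely by additivity.

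To prove the isomorphism, I would compare simple objects, since each Grothendieck group is free abelian on isomorphism classes of simples. The relation $FV=q=p^a$ in $R$ implies that every maximal ideal $\m\subset R$ falls into one of four mutually exclusive types: (i) $p\notin\m$; (ii) $p,V\in\m$ and $F\notin\m$; (iii) $p,F\in\m$ and $V\notin\m$; or (iv) $p,F,V\in\m$. For $\ell\neq p$ the simple $R$-modules above $\ell$ are precisely the type (i) quotients $R/\m$, and $\P$ identifies them with simple $\FF_\ell[F]$-subquotients of $A[\ell]$ for $A\in\C$; surjectivity follows from Lemma~\ref{k_iso_lemma}, which produces a subgroup scheme inside a suitable $A[\ell^n]$ whose $\kk$-points realize any prescribed Frobenius-invariant $R$-submodule of $V_\ell(A)$. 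The simple $\alpha_p\in\IK_{ll}$ maps to $R/(p,F,V)=\FF_p$, which is the unique type (iv) simple.

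The main obstacle is matching the simple objects of $\IK_{rl}$ and $\IK_{lr}$ with the type (ii) and (iii) simples. Here I would use the connected-étale exact sequence $0\to A[p^n]^\circ\to A[p^n]\to A[p^n]^{\et}\to 0$ and its Cartier dual to separate the three pieces of $A[p^n]$ lying in $\IK_{rl}$, $\IK_{lr}$, and $\IK_{ll}$. For a simple $X\in\IK_{rl}$ the étale $\kk$-points $X(\kk)$ have $p$-power order, so multiplication by some power of $q=p^a$ annihilates them, which forces $V=q/F$ to act nilpotently and $F$ invertibly; thus $\P(X)$ is supported at type (ii) ideals. Surjectivity onto each type (ii) simple requires exhibiting an étale subgroup scheme of some $A[p^n]$ whose $\kk$-points realize the given $R$-module, which I would carry out via Dieudonné theory (the $p$-adic analogue of Lemma~\ref{k_iso_lemma}). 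The $\IK_{lr}$ case is symmetric under Cartier duality and matches type (iii), and assembling the four bijections gives the isomorphism $\eps$.
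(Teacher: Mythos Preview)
The paper does not prove this theorem: it is quoted verbatim from~\cite[Theorem 3.1]{How96} and marked with a \qed\ immediately after the statement, with no argument given. There is therefore no proof in the present paper to compare your proposal against.

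That said, your outline is broadly in the spirit of Howe's original argument: decompose both sides according to the four types $rr$, $rl$, $lr$, $ll$, observe that both Grothendieck groups are free on simple objects, and set up a bijection on simples by classifying the maximal ideals of $R$ according to which of $p$, $F$, $V$ they contain. A few points deserve care if you flesh this out. First, your argument that $V$ acts nilpotently on $\P(X)$ for $X\in\IK_{rl}$ is slightly garbled: the clean statement is that \'etaleness forces $F$ to be bijective on $X(\kk)$, while local Cartier dual forces $V$ to be nilpotent on $X$ (since $F$ is nilpotent on $D(X)$); from $FV=q$ and $F$ invertible you then get $V$ nilpotent on the module. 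Second, the surjectivity you need at $p$ --- that every simple $R$-module of types (ii), (iii), (iv) arises from a simple subquotient of some $A[p^n]$ with $A\in\C$ --- is where the real work lies and genuinely requires Dieudonn\'e theory together with an analogue of Lemma~\ref{k_iso_lemma} for $p$-divisible groups; you have correctly flagged this but not supplied it. Third, you should verify that distinct simple objects of $\IK$ go to distinct simple $R$-modules (injectivity on simples), which follows from the type analysis but should be stated.
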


Let $\alpha=a/b\in K=R\otimes\QQ$ be an invertible element. {\it The principal element} corresponding to $\alpha$ is
\[\Pr_R(\alpha)=[R/aR]_R-[R/bR]_R\in G(\Mod_R).\]

\begin{thm}~\cite[Theorem 3.5]{How96}
If $\alpha\in K$ is an isogeny, then \[\eps([\ker\alpha])=\Pr_R(\alpha).\qed\]
\end{thm}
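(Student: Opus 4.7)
The plan is to reduce first to the case $\alpha\in R$, then to verify the equality prime by prime by decomposing both sides along the product structure $\IK=\IK_{rr}\oplus\IK_{rl}\oplus\IK_{lr}\oplus\IK_{ll}$ and the corresponding block decomposition of $G(\Mod_R)$ coming from the maximal ideals of $R$.

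For the reduction, I write $\alpha=a/b$ with $a,b\in R$ both honest isogenies. Realising $\alpha$ as a morphism $\phi\colon A'\to A$ in the isogeny class, the identity $a=b\alpha$ yields a short exact sequence of finite group schemes $0\to\ker b\to\ker a\to\ker\alpha\to 0$, so $[\ker a]=[\ker b]+[\ker\alpha]$ in $G(\IK)$. Since $\Pr_R(\alpha)=[R/aR]_R-[R/bR]_R=\Pr_R(a)-\Pr_R(b)$ directly from the definition, the statement for a general $\alpha$ follows once it is proved for the integral elements $a$ and $b$ in $R$.

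Suppose then that $\alpha\in R$ is an isogeny. At each prime $\ell\neq p$ the $\ell$-primary piece of $\ker\alpha$ is étale and lies in $\IK_{rr}$; by Lemma~\ref{k_iso_lemma}, $\P(\ker\alpha[\ell^\infty])$ is canonically the $R\otimes\ZZ_\ell$-module $T_\ell(A)/\alpha T_\ell(A)$. The $p$-primary pieces in $\IK_{rl}$ and $\IK_{lr}$ are treated by the same type of Tate-module calculation, Cartier-dualised in the $\IK_{lr}$ case following the definition of $\P$. The remaining $\IK_{ll}$ component is generated by the single simple object $\alpha_p$, whose multiplicity in $\ker\alpha$ can be pinned down by comparing $\deg\alpha$ with the contributions already accounted for by the other three components; since $\eps([\alpha_p])=[\ZZ/p\ZZ]_R$ with trivial $F,V$ action, this matches precisely the contribution of $R/\alpha R$ localized at the maximal ideals of $R$ containing both $F$ and $V$.

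The main obstacle is therefore the local identity
\[
[T_\ell(A)/\alpha T_\ell(A)]_{R\otimes\ZZ_\ell}=[(R\otimes\ZZ_\ell)/\alpha(R\otimes\ZZ_\ell)]_{R\otimes\ZZ_\ell}
\]
in $G(\Mod_{R\otimes\ZZ_\ell})$, which expresses a kind of rank-one behaviour of $T_\ell(A)$ as an $R\otimes\ZZ_\ell$-module. The route I would take is to combine Tate's theorem (identifying $\End(A)\otimes\ZZ_\ell$ with the commutant of $R\otimes\ZZ_\ell$ acting on $T_\ell(A)$) with the $K\otimes\QQ_\ell$-module structure of $V_\ell(A)=T_\ell(A)\otimes\QQ_\ell$: together these force $T_\ell(A)$ to differ from a free $R\otimes\ZZ_\ell$-module of the appropriate isotypic rank only by modules supported outside the support of $\alpha$, so that after applying $-/\alpha$ the two Grothendieck-group classes coincide. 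The corresponding local identity at $p$ is handled in parallel by replacing $T_\ell(A)$ with the covariant Dieudonné module over $R\otimes W(k)$ and then descending along $W(k)/\ZZ_p$. Summing the four matched contributions yields $\eps([\ker\alpha])=[R/\alpha R]_R=\Pr_R(\alpha)$, completing the plan.
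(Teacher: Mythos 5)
You should first note that the paper does not prove this statement at all: it is imported verbatim from \cite[Theorem 3.5]{How96}, so the only meaningful comparison is with Howe's argument and with the closely related Lemma~\ref{NonSS} of this paper. Your overall plan — reduce to integral $\alpha$ via the exact sequence $0\to\ker b\to\ker a\to\ker\alpha\to 0$ coming from $a=\alpha\circ b$, then compare the two sides prime by prime, using Tate modules away from $p$ and Dieudonn\'e theory at $p$ — is indeed the natural strategy, and the reduction step is correct. The genuine gap is in the step you yourself single out as the main obstacle, namely the local identity $[T_\ell(A)/\alpha T_\ell(A)]_{R\otimes\ZZ_\ell}=[(R\otimes\ZZ_\ell)/\alpha(R\otimes\ZZ_\ell)]_{R\otimes\ZZ_\ell}$. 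The justification you offer (Tate's theorem plus the $K\otimes\QQ_\ell$-structure ``force $T_\ell(A)$ to differ from a free module of the appropriate isotypic rank only by modules supported outside the support of $\alpha$'') is not an argument and is not even the correct statement: two lattices spanning the same $K\otimes\QQ_\ell$-module can perfectly well differ at primes in the support of $\alpha$. What one actually proves is that the class $[T/\alpha T]$ in $G(\Mod_{R\otimes\ZZ_\ell})$ is independent of the lattice $T$ spanning a fixed $K\otimes\QQ_\ell$-module: for $T'\subset T$, the exact sequences $0\to T'/\alpha T'\to T/\alpha T'\to T/T'\to 0$ and $0\to \alpha T/\alpha T'\to T/\alpha T'\to T/\alpha T\to 0$, together with $T/T'\cong\alpha T/\alpha T'$, give $[T/\alpha T]=[T'/\alpha T']$. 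Granting that, your identity holds exactly when $V_\ell(A)$ is free of rank one over $K\otimes\QQ_\ell$ (take $T'=R\otimes\ZZ_\ell$), i.e.\ when the Weil polynomial of the class is squarefree — precisely the hypothesis this paper has to impose in Lemma~\ref{NonSS} (``$f_A$ separable''), and the hypothesis under which Howe's formula is applied. If the isotypic rank is $r>1$ the equality you display is simply false: for the isogeny class of $E\times E$ and $\alpha=\ell$ one gets $\eps([\ker\ell])=2[R/\ell R]_R\neq\Pr_R(\ell)$, and Tate's theorem (whose commutant in that case is a matrix algebra) does nothing to prevent this. So as written the proof does not go through: you must either import the relevant hypothesis explicitly and use the lattice-independence argument, or concede that the asserted local identity fails.

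A secondary, smaller gap: the $p$-primary part is only a plan. The Dieudonn\'e module lives over the Witt vectors of $\FF_q$ rather than over $\ZZ_p$, the functor $\P$ on $\IK_{lr}$ involves the conjugation twist built into $\widehat M$, and the multiplicity of $\alpha_p$ has to be matched against the length of $(R/\alpha R)_\m$ at the maximal ideals containing $p$, $F$ and $V$. The degree-counting trick you propose can be made to work once the other three components are controlled, but none of this is carried out, and it is where a substantial part of Howe's actual proof lies.
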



\subsection{Kernels of polarizations on abelian varieties over finite fields.}
This section is based on the main results of the paper~\cite{How96} about
principal polarizations on abelian varieties over finite fields.

Let $R^+=\ZZ[F+V]\subset R$, and let $K^+=R^+\otimes\QQ\subset K=R\otimes\QQ$.
Denote by $\OO$ the ring of integers of $K$, and let $\OO^+=K^+\cap\OO$ be the ring of integers of $K^+$. 
Choose an order $R\subset S$ in $K$ stable under involution, and put $S^+=S\cap \OO^+$.

Define an involution $P\mapsto \bar P$ on $G(Mod_S)$ by the formula:
 \[\overline{[M]}_S=[\widehat M]_S.\]
Since any $S$-module is naturally an $S^+$-module, the formula $N_{S/S^+}([M]_S)=[M]_{S^+}$ defines
a norm $N_{S/S^+}:G(Mod_S)\to G(Mod_{S^+})$.
Let $Z(S)$ be the subgroup of symmetric elements in the kernel
of \[G(Mod_S)\to G(Mod_{S^+})\otimes\ZZ/2\ZZ,\] induced by the norm. 
It is known that $\Pr(Tp(K))\subset Z(R)$~\cite[Section 5]{How96}.

An ample line bundle $L$ on $A$ is called a polarization. It defines an isogeny to the dual abelian variety \[\phi_L:A\to \check{A}.\]
The kernel of this isogeny $\ker L$ is called \emph{the kernel of polarization}. 
A polarization is \emph{principal} if the kernel is trivial.
The group scheme $\ker L$ is endowed with the Weil pairing $e_L:\ker L\times \ker L\to G_m$~\cite[\S 23]{Mum}.

A finite group scheme $X$ from $\IK$ is called
{\it attainable in the isogeny class $\C$} if there exists an abelian variety $A$ in $\C$ 
and a polarization $L$ on $A$ such that $X=\ker L$. 
An element $P$ of $G(\Mod_R)$ is called \emph{attainable} if it
is \emph{effective}, that is, $P=[M]_R$ for some $R$-module $M$, and
there exists a group scheme $X$ attainable in $\C$ such that
$\eps([X]_\C)=P$. 
We say that an ideal $\p\subset R$ is \emph{attainable}, if $[R/\p]_R$ is attainable.

 Let $B(S)=\{P+\bar P: P\in G(\Mod_S)\}$, and let $\B(S)$ be $Z(S)/(\Pr(Tp(K))B(S))$.

\begin{thm}~\cite[Theorem 1.3]{How96}
\label{Howe_thm}
Let $\C$ be an isogeny class of abelian varieties over $k$.
Then there exists an element $I\in\B(R)$ such that attainable elements in $G(\Mod_R)$ 
are exactly the effective elements of $Z(R)$ that belong to the class $I$.

In particular, $\C$ contains a principally polarized abelian variety if and only if $I=0$.\qed
\end{thm}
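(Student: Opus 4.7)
The plan is to establish, in three logically separate steps, the claim that attainable elements of $G(\Mod_R)$ are precisely the effective symmetric elements lying in a single coset $I\in\B(R)=Z(R)/\bigl(\Pr(Tp(K))B(R)\bigr)$. The three steps are: (1) every attainable element lies in $Z(R)$; (2) the class of $\eps([\ker L])$ in $\B(R)$ depends only on $\C$, defining $I$; (3) every effective element of $Z(R)$ in the class $I$ is attainable. The principal polarization corollary is then immediate: the zero element of $G(\Mod_R)$ is effective and symmetric, so it is attainable iff its class equals $I$, iff $I=0$.

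For step (1), let $L$ be a polarization on $A\in\C$. The Weil pairing $e_L:\ker L\times\ker L\to G_m$ is a nondegenerate alternating form that identifies $\ker L$ canonically with its Cartier dual $D(\ker L)$. Under the isomorphism $\eps:G(\IK)\cong G(\Mod_R)$, this identification translates to $\overline{\eps([\ker L])}=\eps([\ker L])$, so $\eps([\ker L])$ is symmetric. The alternating property of $e_L$ further forces $\ker L$, viewed as an $R^+$-module, to decompose in the Grothendieck group into pairs of mutually dual Lagrangians, so its image in $G(\Mod_{R^+})\otimes\ZZ/2\ZZ$ vanishes. Together these show $\eps([\ker L])\in Z(R)$.

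For step (2), two polarizations $L_1,L_2$ on the same $A$ differ by $\phi_{L_2}=\phi_{L_1}\circ\lambda$ with $\lambda\in K^+$ totally positive; a short-exact-sequence calculation at the level of kernels yields $\eps([\ker L_2])-\eps([\ker L_1])=\Pr_R(\lambda)\in\Pr(Tp(K))$. If $\psi:A'\to A$ is an isogeny and $L'=\psi^*L$, a direct analysis of the kernel exact sequence relating $\ker L'$, $\ker L$, and $\ker\psi$ gives $\eps([\ker L'])-\eps([\ker L])=\eps([\ker\psi])+\overline{\eps([\ker\psi])}\in B(R)$. Since every pair of varieties in $\C$ is linked by isogenies, the class of $\eps([\ker L])$ in $\B(R)$ is independent of the choice of $(A,L)$ and defines the invariant $I$.

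The main obstacle is step (3): given $[N]_R\in Z(R)$ effective and in the class $I$, construct $(A',L')\in\C$ with $\eps([\ker L'])=[N]_R$. Fixing any polarized $(A,L)$ realizing $I$, the hypothesis yields a representation $[N]_R-\eps([\ker L])=\Pr_R(\lambda)+Q+\bar Q$ for some totally positive $\lambda\in K$ and some $Q\in G(\Mod_R)$. Working prime-by-prime through the decomposition $\IK=\IK_{rr}\oplus\IK_{rl}\oplus\IK_{lr}\oplus\IK_{ll}$ of section~\ref{fin_subsch} and invoking Lemma~\ref{k_iso_lemma}, one realizes $Q$ as $\eps([\ker\psi])$ for a suitable isogeny $\psi:A'\to A$; then a line bundle on $A'$ built from $\psi^*L$ by a $\lambda$-modification has kernel class $[N]_R$. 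The technical heart of the argument is verifying that this line bundle is genuinely a polarization rather than a mere self-dual isogeny to the dual: total positivity of $\lambda$ preserves positive-definiteness under the Rosati involution, and membership of $[N]_R$ in $Z(R)$ supplies the Lagrangian structure that matches the pulled-back Weil pairing, so the construction produces a bona fide ample line bundle.
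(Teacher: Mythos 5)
The paper itself gives no proof of Theorem~\ref{Howe_thm}: it is quoted verbatim from \cite[Theorem 1.3]{How96} and used as a black box. So what you have written is an attempt to reprove a substantial external theorem. Your outline does follow the broad shape of Howe's argument (symmetry of polarization kernels, well-definedness of a class in $\B(R)$, realization of every effective element of $Z(R)$ in that class), but each of the three genuinely hard points is asserted rather than proved, and one of them is stated incorrectly.

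Concretely: (i) in your step (2), if $L_1,L_2$ are polarizations on the same $A$, then $\lambda=\phi_{L_1}^{-1}\phi_{L_2}$ is a Rosati-fixed totally positive element of $\End^\circ(A)$, which in general is strictly larger than $K$ and may be noncommutative; there is no reason for $\lambda$ to lie in $K^+$, and for $\lambda\notin K$ the symbol $\Pr_R(\lambda)$ is not even defined. Showing that the resulting difference of kernel classes nevertheless lies in $\Pr(Tp(K))B(R)$ is a real computation (Howe does it via norm arguments), which you omit. (ii) In step (1), the claim that the alternating Weil pairing forces the image of $\eps([\ker L])$ in $G(\Mod_{R^+})\otimes\ZZ/2\ZZ$ to vanish ``by decomposition into dual Lagrangians'' is precisely the nontrivial content: at $p$ the kernel can have a local-local ($\alpha_p$-type) part and no Lagrangian subgroup scheme is available a priori; this is a theorem in \cite{How96}, not a formal consequence of nondegeneracy. (iii) Step (3) is the technical core, and your argument for it is circular: over a finite field the passage from a symmetric, Rosati-positive isogeny $A\to\check{A}$ with prescribed kernel class to an honest ample line bundle over $k$ is exactly where the obstruction measured by $\B(R)$ lives. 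If ``total positivity preserves positive-definiteness'' and ``membership in $Z(R)$ supplies the Lagrangian structure'' sufficed as stated, the obstruction would essentially always vanish, contradicting the existence of isogeny classes with no principally polarized member --- the phenomenon the theorem is designed to quantify. As it stands the proposal is a plan, not a proof; for the purposes of this paper the correct move is what the author does, namely to cite \cite{How96}.
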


We are going to study attainable primes.
The group $Z(R)/B(R)$ is a vector space over $\FF_2$, with basis formed by the classes of the form $[R/\p]_R$,  where $\p$ is the maximal ideal of $R$ stable with respect to involution and such that the degree extension of $R/\p$ over $R^+/(R^+\cap\p)$ is two. 
We call such ideals $\p$ \emph{generating}, and denote the ideal $R^+\cap\p$ by $\p^+$.

Let $M$ be a finite $R$-module. We denote by $M_\p$ the localization at a prime
ideal $\p\subset R$. 
It follows from the Chinese Remainder Theorem~\cite[Theorem 2.13]{Eisenbud} that
\[M\cong\oplus_\p M_\p,\] 
where the sum is taken over all maximal ideals $R$. 
The set of prime ideals $\p$ such that $M_\p\neq 0$ is called \emph{the support} of $M$.

\begin{rem}\label{loc_eq}
Since the localization functor is exact, if $M$ and $N$ are equivalent in $G(\Mod_R)$, then $M_\p$ and $N_\p$ are also equivalent.
Let $[M]_R=\eps([\Delta])$, where $\Delta$ is a finite group scheme.
Then $[M_\p]_R=\eps([\Delta_\p])$, where $\Delta=\oplus_\p \Delta_\p$. 
\end{rem}

We now prove that if $\p$ is not a generating ideal, because $R/\p\cong R^+/\p^+$, then it is not attainable.

\begin{lemma}\label{key_rem}
Let $\Delta$ be a kernel of polarization on $A$, and let $\p$ be a symmetric ideal such that $R/\p\cong R^+/\p^+$. 
Then $\epsilon([\Delta_\p])\neq [R/\p]_R$.
\end{lemma}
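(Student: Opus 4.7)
The plan is to derive a contradiction from the norm condition imposed by Theorem~\ref{Howe_thm}. Since $\Delta=\ker L$ is a kernel of polarization, we have $\eps([\Delta])\in Z(R)$, so
\[N_{R/R^+}(\eps([\Delta]))\equiv 0\pmod{2}\quad\text{in}\quad G(\Mod_{R^+}).\]
I intend to extract a contradiction by examining the $\p^+$-primary component of this norm.

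First, I would decompose $\Delta=\bigoplus_\q\Delta_\q$ over the maximal ideals of $R$; by Remark~\ref{loc_eq}, the summands contributing to the $\p^+$-primary component of $N_{R/R^+}(\eps([\Delta]))$ are exactly those $\Delta_\q$ with $\q\cap R^+=\p^+$. Because $R=R^+[F]$ is a quadratic $R^+$-algebra (with $F$ satisfying $F^2-(F+V)F+q=0$), the quotient $R\otimes_{R^+}R^+/\p^+$ is a two-dimensional algebra over the field $R^+/\p^+$, and the usual split/inert/ramified trichotomy shows that it has at most one symmetric maximal ideal. Hence $\p$ is the unique symmetric prime of $R$ above $\p^+$. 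For every other prime $\q$ above $\p^+$, the conjugate $\bar\q\neq\q$ also lies above $\p^+$; the symmetry $\overline{[\Delta]}=[\Delta]$ from the Weil pairing on $\ker L$ gives $\overline{[\Delta_\q]_R}=[\Delta_{\bar\q}]_R$, and since the norm forgets the $R$-action, $N_{R/R^+}([\Delta_\q])=N_{R/R^+}([\Delta_{\bar\q}])$. Thus each pair $\{\q,\bar\q\}$ contributes $2\,N_{R/R^+}([\Delta_\q])$, which vanishes modulo~$2$.

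Consequently, the $\p^+$-primary component of $N_{R/R^+}(\eps([\Delta]))$ modulo $2$ is determined entirely by $\Delta_\p$. If $\eps([\Delta_\p])=[R/\p]_R$, then since $R/\p\cong R^+/\p^+$ as $R^+$-modules,
\[N_{R/R^+}([R/\p]_R)=[R/\p]_{R^+}=[R^+/\p^+]_{R^+},\]
which generates the $\p^+$-primary part of $G(\Mod_{R^+})$ and so is nonzero modulo~$2$. This contradicts $\eps([\Delta])\in Z(R)$, proving the lemma. The main point of care is the uniqueness of the symmetric prime of $R$ above $\p^+$; this rests on the quadratic structure of $R$ over $R^+$ and should survive even when the order $R$ is not maximal, since the argument is purely local at $\p^+$ and uses only that $R\otimes_{R^+}R^+/\p^+$ is two-dimensional over a field.
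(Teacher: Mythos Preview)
Your proof is correct and follows essentially the same approach as the paper: both use Theorem~\ref{Howe_thm} to place $\eps([\Delta])$ in $Z(R)$, localize the norm condition at $\p^+$, and contrast this with the fact that $N_{R/R^+}([R/\p]_R)=[R^+/\p^+]_{R^+}$ is odd. The only difference is that the paper is terser: under the hypothesis that $\p$ is symmetric with $R/\p\cong R^+/\p^+$, your own trichotomy on the two-dimensional algebra $R\otimes_{R^+}R^+/\p^+$ forces the ramified case (in the split case conjugation necessarily swaps the two factors, so neither prime would be symmetric), hence $\p$ is the \emph{unique} prime of $R$ over $\p^+$ and your treatment of conjugate pairs $\{\q,\bar\q\}$ is vacuous; the paper accordingly just writes $N_{R/R^+}(M_\p)\cong N_{R/R^+}(M)_{\p^+}$ and concludes $[M_\p]_R\in Z(R)$ directly.
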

\begin{proof}
If the symmetric ideal $\p$ has the property $R/\p\cong R^+/\p^+$, then the class $[R/\p]_R$ does not belong to $Z(R)$.
Therefore, we have to prove that $\epsilon([\Delta_\p])$ belongs to $Z(R)$.

Let $M$ be an $R$-module such that $[M]_R= \epsilon([\Delta])$; in particular, $\epsilon([\Delta_\p])=[M_\p]_R$.
 According to Theorem~\ref{Howe_thm}, $N_{R/R^+}(M)$ is equivalent to $N\oplus N$ for some $R^+$-module $N$.
Since $\p$ is symmetric, $N_{R/R^+}(M_\p)\cong N_{R/R^+}(M)_{\p^+}$.
By Remark~\ref{loc_eq},  $N_{R/R^+}(M)_{\p^+}$ is equivalent to $N_{\p^+}\oplus N_{\p^+}$;
therefore, $[M_\p]_R\in Z(R)$.
\end{proof}

We say that an isogeny class $\C$ is \emph{exact}, if the following sequence is exact
\[Z(\OO)/B(\OO)\stackrel{N}{\rightarrow} Z(R)/B(R)\to\B(R)\to 0, \eqno(*)\]
where $N$ is induced by the norm $N_{\OO/R}:G(\Mod_\OO)\to G(\Mod_R)$.
In particular, an element $x\in Z(R)/B(R)$ vanishes in $\B(R)$ if and only if it lies in the image of $N$. 

\begin{rem}\label{rem2}
If the isogeny class is exact, and $R$ is maximal at a generating ideal $\p$, then $\p$ is attainable.
\end{rem}

\begin{thm}\label{exact}
An isogeny class $\C$ is exact if and only if there exists a principally polarized abelian variety in $\C$.

An isogeny class $\C$ is exact in the following cases:
\begin{enumerate}
	\item $\dim\C$ is odd;
	\item  $\C$ is the class of a geometrically simple abelian surface. \qed
\end{enumerate}
\end{thm}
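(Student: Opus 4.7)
The plan is to apply Theorem~\ref{Howe_thm} as the key bridge. By construction $\B(R)=Z(R)/(\Pr(Tp(K))+B(R))$, so the kernel of the natural quotient $Z(R)/B(R)\to\B(R)$ is precisely the image of $\Pr(Tp(K))$ modulo $B(R)$. Exactness of $(*)$ is therefore equivalent to the equality of the images of $N_{\OO/R}$ and of $\Pr(Tp(K))$ inside $Z(R)/B(R)$.

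For the first assertion, the \emph{if} direction is the conceptual core: every isogeny class over a finite field contains an abelian variety $A'$ with $\End(A')=\OO$, and for any polarization $L$ on such $A'$ the class $[\ker L]_\OO\in Z(\OO)$ norms down to $[\ker L]_R$, which by Theorem~\ref{Howe_thm} represents the obstruction $I$ in $\B(R)$. So the image of $N_{\OO/R}$ composed with the quotient to $\B(R)$ always contains $I$; if $(*)$ is exact this composite image vanishes, hence $I=0$ and a principal polarization exists. The converse requires showing that when $I=0$, every element in the image of $N_{\OO/R}$ lies modulo $B(R)$ in the image of $\Pr(Tp(K))$; my strategy is to perturb an arbitrary symmetric $\OO$-module by a principal element so that it becomes the class of an actual polarization kernel, then invoke Theorem~\ref{Howe_thm} together with the compatibility of $N_{\OO/R}$ with principal elements.

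For part (2), both cases reduce to verifying the above image equality prime-by-prime: since $Z(R)/B(R)$ is generated by classes $[R/\p]_R$ for generating ideals $\p$, it suffices to check each such $\p$. Remark~\ref{rem2} handles any $\p$ at which $R$ is maximal, so the problem concentrates on primes dividing the conductor of $R$ in $\OO$. When $\dim\C$ is odd, the parity of residue degrees of $\p$ over $\p^+$, combined with Lemma~\ref{key_rem}, eliminates the remaining non-maximal contributions: odd-dimensional parity forces the only symmetric generating primes at the conductor to satisfy $R/\p\cong R^+/\p^+$, hence not be generating. For a geometrically simple abelian surface, the endomorphism algebra is a CM-field of degree $4$ (ordinary or mixed case) or a quaternion algebra over $\QQ$ (supersingular case); in each situation the possible conductor primes form a short explicit list that can be checked by hand. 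The main obstacle is the supersingular case, where $\IK_{ll}$ is nontrivial and one must track the local--local contribution via $\eps([\alpha_p])$ and the action of the involution on $R$-modules localized at $p$; in the ordinary and mixed cases only primes different from $p$ contribute, which makes the required case analysis substantially simpler.
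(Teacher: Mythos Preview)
Your approach diverges substantially from the paper's, and it contains genuine gaps.

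For the first assertion the paper does not argue directly with modules and norms. It invokes a push-out diagram from \cite[Proposition 6.4]{How95} relating $Z(\OO)/B(\OO)\to\B(\OO)$ and $Z(R)/B(R)\to\B(R)$, so that exactness of $(*)$ is equivalent to the map $i^*:\B(\OO)\to\B(R)$ being zero. Since $\B(\OO)$ is either trivial or $\ZZ/2\ZZ$ by \cite[Proposition 6.2]{How95}, and since the image of the generator under $i^*$ is the obstruction $I$, the map $i^*$ vanishes if and only if $I=0$, which by Theorem~\ref{Howe_thm} is exactly the existence of a principal polarization. Your ``if'' direction relies on the claim that every isogeny class contains a variety with $\End(A')=\OO$; this is not obvious in the non-ordinary case and you give no reference. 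Your converse (``perturb an arbitrary symmetric $\OO$-module by a principal element so that it becomes the class of an actual polarization kernel'') is only a sketch, and it is unclear how you intend to realize an arbitrary element of $Z(\OO)$ as a polarization kernel class.

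For part~(2) your prime-by-prime strategy breaks down. In the odd-dimensional case you write that ``odd-dimensional parity forces the only symmetric generating primes at the conductor to satisfy $R/\p\cong R^+/\p^+$, hence not be generating.'' This is self-contradictory: a generating ideal by definition has $[R/\p:R^+/\p^+]=2$, so no generating ideal can satisfy that residue-field equality. There is no reason why conductor primes should fail to be generating in odd dimension; the actual argument in \cite[Theorem 1.2]{How95} is a sign computation showing $I=0$ directly, not a local analysis of ideals. In case~(2) you propose to handle the supersingular subcase, but a supersingular abelian surface is never geometrically simple (it becomes isogenous to a square of a supersingular elliptic curve over $\bar k$), so that case does not arise; the hypothesis forces $A$ to be ordinary or mixed with $K$ a CM quartic field. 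The paper simply cites \cite[Theorem 1.2]{How95} and \cite[Theorem 4.3]{MN}, which establish existence of a principal polarization in these two situations, and then applies the first part of the theorem.
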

\begin{proof}
According to~\cite[Proposition 6.4]{How95}, there is a push-out diagram:
\[\xymatrix{
Z(\OO)/B(\OO) \ar[r]\ar[d]^{N} & \B(\OO) \ar[d]^{i^*}  \\
Z(R)/B(R) \ar[r] & \B(R) } \]

It follows that the isogeny class $\C$ is exact if and only if $i^*$ is the zero map.
According to~\cite[Proposition 7.1]{How95}, and Theorem~\ref{Howe_thm}, if $i^*$ is the zero map, then there exists a principally polarized abelian variety in $\C$. On the other hand, by~\cite[Proposition 6.2]{How95}, $\B(\OO)$ is either trivial or isomorphic to $\ZZ/2\ZZ$. Therefore, if the obstruction element $I$ is trivial, then $i^*$ is the zero map.

The second part of the theorem follows from~\cite[Theorem 1.2]{How95}, and~\cite[Theorem 4.3]{MN}.
\end{proof}


\subsection{Exceptional primes}\label{ExPrimes}
In this section, we examine the case where a generating prime is not attainable.

Let $\p\subset R$ be a maximal ideal over $\ell\in\ZZ$. Let $L$ be an unramified extension of $\QQ_\ell$
with the residue field $R/\p$. Denote by $\Lambda_\p$ the ring of integers of $L$. 
According to~\cite[Proposition 2.5]{Ry14}, the localization $R_\ell\cong R\otimes_\ZZ{\ZZ_\ell}$ is a $\Lambda_\p$--algebra.
In the same way we can start with  $\p^+=\p\cap R^+$, and define $\Lambda^+_\p$ as a ring of integers in an unramified extension $L^+$ of $\QQ_\ell$ with residue field $R^+/\p^+$.
Note that if $\p$ is generating, then $[L:L^+]=2$.
In applications we need only the case $\Lambda^+_\p=\ZZ_\ell$. 

We say that $\ell$ is \emph{an exceptional prime} if there exists a prime ideal $\p_1\subset\OO$ over $\ell$ such that
\begin{enumerate}
\item $\p=\p_1\cap R$ is a generating ideal;
\item if $\p_1^+=\p_1\cap\OO^+$, then  $\dim(\OO^+/\p_1^+)>\dim(R^+/\p^+)$;
\end{enumerate}
In this situation, we say that $\p_1$ is \emph{an exceptional prime ideal}.

\begin{thm}\label{ExPrime}
Let $\C$ be an exact isogeny class, and let $\p_1\subset\OO$ be a maximal ideal such that $\p=\p_1\cap R$ is generating. 
\begin{enumerate}
\item If $\dim(\OO^+/\p_1^+\cap\OO^+)=\dim(R^+/\p^+),$ then the class $[R/\p]_R$ is attainable. 
\item If $\p_1$ is an exceptional prime ideal, then $\p_2=\overline{\p_1}$ is an exceptional prime ideal such that $\p=\p_2\cap R$.
Moreover, $\p_1\neq\p_2$, and $\p$ is \emph{not} attainable.
\end{enumerate}
\end{thm}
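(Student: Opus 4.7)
Plan for the proof.

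The strategy is to exploit Theorem~\ref{Howe_thm} together with the exactness hypothesis on $\C$: because $\C$ is exact, the obstruction $I\in\B(R)$ vanishes and the sequence $(*)$ is exact, so an effective class $[R/\p]_R\in Z(R)/B(R)$ is attainable if and only if it lies in the image of the norm map $N\colon Z(\OO)/B(\OO)\to Z(R)/B(R)$. The plan for both parts is to decide whether $[R/\p]_R$ lies in $\mathrm{image}(N)$ by analyzing the primes of $\OO$ above $\p$ and invoking the structure of $Z(\OO)/B(\OO)$ described in Section~\ref{fin_subsch}.

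For part~(1), under the hypothesis $\dim(\OO^+/\p_1^+)=\dim(R^+/\p^+)$, I would first show that $\p_1=\bar\p_1$ is inert in $\OO/\OO^+$ and that $\OO/\p_1=R/\p$. Indeed, the embedding $R/\p\hookrightarrow\OO/\p_1$ is a degree-$2$ extension of $R^+/\p^+=\OO^+/\p_1^+$, so $[\OO/\p_1:\OO^+/\p_1^+]\ge 2$; since the CM extension $K/K^+$ has degree $2$, equality holds, forcing $\p_1=\bar\p_1$ and $\OO/\p_1=R/\p$. Then $[\OO/\p_1]_\OO$ is a basis class of $Z(\OO)/B(\OO)$, and $N([\OO/\p_1]_\OO)=[R/\p]_R$ since $[\OO/\p_1:R/\p]=1$. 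Exactness of $(*)$ together with Theorem~\ref{Howe_thm} yields attainability.

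For part~(2) the plan has two steps. First, I establish $\bar\p_1\ne\p_1$ using the exceptional hypothesis. The polynomial $t^2-(F+V)t+q$ satisfied by $F$ over $R^+$ has discriminant $y^2-4q$, where $y=F+V$. Because $\p$ is generating, this discriminant is a non-square in $R^+/\p^+$. The same element differs from the CM discriminant of $K/K^+$ by a square in $K^{+\times}$, so one argues via the arithmetic of square classes in the strict residue field extension $R^+/\p^+\subset\OO^+/\p_1^+$ that $y^2-4q$ becomes a square in $\OO^+/\p_1^+$, causing the quadratic to split over this larger field and $\p_1^+$ to split in $\OO/\OO^+$. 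Hence $\p_1\ne\bar\p_1$. Setting $\p_2=\bar\p_1$, symmetry gives $\p_2\cap R=\bar\p=\p$ and $\p_2^+=\p_1^+$, so $\p_2$ is again exceptional above $\p$. Second, for non-attainability I show $[R/\p]_R\notin\mathrm{image}(N)$: the basis of $Z(\OO)/B(\OO)$ consists of inert (generating) primes of $\OO$, while any split pair $\{\q,\bar\q\}$ contributes $[\OO/\q]+[\OO/\bar\q]\in B(\OO)$ with trivial class. Applying the first step to every prime of $\OO$ above $\p$ shows that all such primes arise in split conjugate pairs, so there is no inert prime of $\OO$ above $\p$, no basis class of $Z(\OO)/B(\OO)$ maps to $[R/\p]_R$ under $N$, and $[R/\p]_R\ne 0$ in $\B(R)$. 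Theorem~\ref{Howe_thm} then shows that $\p$ is not attainable.

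The principal obstacle is the discriminant/square-class argument in the first step of part~(2): controlling the square class of $y^2-4q$ under the residue extension $R^+/\p^+\subset\OO^+/\p_1^+$ and relating it to the ramification of $\p_1^+$ in the CM extension $K/K^+$. Once this splitting is established, the remainder of the proof is bookkeeping with the norm map together with the exactness of $(*)$ and the description of $Z(\OO)/B(\OO)$ by inert primes.
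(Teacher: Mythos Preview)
Your proposal follows essentially the same strategy as the paper: reduce attainability to membership in the image of the norm $N$ via exactness of $(*)$, then in part~(1) show $\p_1=\bar\p_1$ with $\OO/\p_1=R/\p$ so that $N([\OO/\p_1]_\OO)=[R/\p]_R$, and in part~(2) show $\p_1\neq\bar\p_1$ and argue $[R/\p]_R$ is not hit by $N$. The differences are only in presentation. For the splitting $\p_1\ne\bar\p_1$, the paper writes down the structural identity
\[
\OO_{\p_1}\;=\;\OO^+_{\p_1^+}\cdot\Lambda\;\cong\;\OO^+_{\p_1^+}\otimes_{\Lambda^+_\p}\Lambda_\p
\]
(using that the image $\Lambda$ of $\Lambda_\p$ is not contained in $R^+$, since $\p$ is generating) and reads off the splitting from the fact that the maximal unramified subring of $\OO^+_{\p_1^+}$ strictly contains $\Lambda^+_\p$ while $[L:L^+]=2$. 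Your discriminant/square--class argument is precisely the residue--field shadow of this tensor computation: the quadratic $t^2-yt+q$ splits over $\OO^+/\p_1^+$ exactly when $\Lambda_\p$ embeds into $\OO^+_{\p_1^+}$. So what you flag as ``the principal obstacle'' is handled in the paper by one line of structure theory rather than a square--class chase. For non--attainability the paper concludes with a dimension inequality $\dim(R/\p)<\dim(\OO/\p_1)+\dim(\OO/\p_2)$ rather than your basis--class formulation, but the two are equivalent.

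One point you should tighten: in step~2 of part~(2) you write ``applying the first step to every prime of $\OO$ above $\p$,'' but the exceptional hypothesis is only imposed on the given $\p_1$, and you have not shown that every other prime of $\OO$ over $\p$ satisfies the same inequality of residue degrees. The paper's argument is local at $\p_1^+$: having established $\p_1^+\OO=\p_1\p_2$ with $\p_1\neq\p_2$, it passes to the support and argues that any preimage of $[R/\p]_R$ in $Z(\OO)$ must lie in the span of $[\OO/\p_1]_\OO+[\OO/\p_2]_\OO$, and then the dimension count rules this out. You should phrase your conclusion in the same local way rather than asserting a global statement about all primes of $\OO$ above $\p$.
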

 \begin{proof}
Since $\p$ is symmetric, $\p\subset\p_2$. The ideal $\p\subset R$ is maximal; therefore, $\p=\p_2\cap R$. Since $\p$ is generating, the image $\Lambda$ of a natural monomorphism from $\Lambda_\p$ to the localization $K_\ell$ 
is not contained in $R^+$; therefore,
 \[\OO_{\p_1}=\OO^+_{\p_1^+}\cdot\Lambda\cong \OO^+_{\p_1^+}\otimes_{\Lambda^+_\p}\Lambda_\p.\]

Assume first that $\dim(\OO^+/\p_1^+)=\dim(R^+/\p^+)$. Then the maximal unramified subring of $\OO^+_{\p_1^+}$ is equal to the image of $\Lambda^+_\p$.
It follows that \[\OO_{\p_1}\cong\OO^+_{\p_1^+}\otimes_{\Lambda^+_\p}\Lambda_\p\] is a domain and $\p_1=\p_2=\p_1^+\OO$ is symmetric.

Since $\p$ is generating, \[\dim(R/\p)=2\dim(R^+/\p^+).\]
On the other hand, \[\dim(\OO/\p_1)\leq 2\dim(\OO^+/\p_1^+)=2\dim(R^+/\p^+),\] and therefore the embedding
$R/\p\to\OO/\p_1\OO$ is an isomorphism, that is, $[R/\p]_R$ lies in the image of the norm, and $\p$ is attainable.

Assume now that $\dim(\OO^+/\p_1^+)>\dim(R^+/\p^+)$. Then the maximal unramified subring of 
$\OO^+_{\p_1^+}$ is greater than the image of $\Lambda^+_\p$;
since  $[L:L^+]=2$ we have \[\OO^+_{\p_1^+}\otimes_{\Lambda^+_\p}\Lambda_\p\cong \OO^+_{\p_1^+}\oplus\OO^+_{\p_1^+}.\]
In this case we get that $\p_1^+\OO=\p_1\p_2$, and $\p_1\neq\p_2$.

Clearly, $[\OO/\p_1]_\OO\not\in Z(\OO)$, and $[R/\p]_R\not\in B(R)$, because $R/\p$ is a simple $R$-module.
The support of $[R/\p]_R$ is equal to $\p$; therefore, if $[R/\p]_R$ is in the image of the norm, then
it is in the image of the submodule of $Z(\OO)$ generated by $[\OO/\p_1]_\OO+[\OO/\p_2]_\OO$. 
This is nonsense, because the natural morphism $R/\p\to\OO/\p_1$ is injective, and $\dim(R/\p)<\dim(\OO/\p_1)+\dim(\OO/\p_2)$.
The theorem is proved.
\end{proof}

\section{Polarizations on the product of two abelian varieties.}\label{S3}
Recall the Serre construction of the principal polarization on a product of two polarized abelian varieties~\cite{KL}. Let $A_1$ and $A_2$ be abelian varieties with
polarizations $L_1$ and $L_2$ of degree $N^2$. Assume that we are given an
isomorphism $\psi: \Ker L_1\to \Ker L_2$ with the following property:
\[e_{L_2}(\psi(x),\psi(y))=e_{L_1}(x,y)^{-1},\] in other words, $\psi$ is an anti-isometry.
Denote by $\Delta$ the image of the mapping \[Id\times\psi:\Ker L_1\to \Ker L_1\times \Ker L_2.\]
On $J=(A_1\times A_2)/\Delta$ one get a principal polarization as follows.
The kernel of the product polarization on $A_1\times A_2$ is $\Ker L_1\times \Ker L_2$.
The restriction of the product form $e_{L_1}\times e_{L_2}$ to $\Delta$ is trivial. According to~\cite[\S 23,Theorem 2]{Mum}, 
the polarization on $A_1\times A_2$ descends to a principal polarization $M$ on $J$. 
This construction is called {\it the gluing of polarizations}.

We say that the principal polarization $L$ on $J$ is {\it reducible} if
there exist abelian subvarieties $J_1$ and $J_2$ with polarizations
$L_1$ and $L_2$ such that $J\cong J_1\times J_2$ and $L$ is the product of
polarizations $L_1$ and $L_2$. Otherwise, we call the polarization {\it irreducible}.

Let $E$ be the set of integers $e$ such that for any finite group scheme $\Delta$
over $k$ that can be embedded in a variety isogenous to $A_1$ and in a variety isogenous
to $A_2$ we have $e\Delta=0$. By definition, \emph{the gluing exponent} $e(A_1,A_2)$ is the greatest common divisor of the set $E$~\cite{HL}.
Note that $e(A_1,A_2)$ is finite if and only if there is no abelian variety with a nonzero morphism to both $A_1$ and $A_2$.

\begin{lemma}\label{glue}
Let $A_1$ and $A_2$ be simple abelian varieties, and let there exist polarizations $L_1$ on $A_1$ and $L_2$ on $A_2$ of degree $N^2>1$ and an anti-isometry $\Ker L_1\to \Ker L_2$.
If $e(A_1,A_2)$ is finite, then the gluing of $L_1$ and $L_2$ is irreducible.
\end{lemma}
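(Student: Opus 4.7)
The plan is to argue by contradiction. Suppose $M$ is reducible, so that $J=(A_1\times A_2)/\Delta\cong J_1\times J_2$ as polarized abelian varieties, with $\dim J_j>0$ and $M=M_1\times M_2$.

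The first step is to identify the isogeny types of $J_1$ and $J_2$. Since $\Delta$ is finite, $J$ is isogenous to $A_1\times A_2$. The finiteness of $e(A_1,A_2)$ excludes the existence of an abelian variety admitting a nonzero morphism to both $A_1$ and $A_2$; for simple $A_1,A_2$ this is equivalent to $A_1$ and $A_2$ being non-isogenous. Poincar\'e complete reducibility then shows that the simple isogeny factors of $A_1\times A_2$ are precisely $A_1$ and $A_2$, each with multiplicity one, so after possibly swapping indices we have $J_1\sim A_1$ and $J_2\sim A_2$.

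The second step is to decompose the quotient isogeny $\pi:A_1\times A_2\to J_1\times J_2$ with $\ker\pi=\Delta$ as a product of isogenies. Viewing $\pi$ as a matrix of morphisms, its off-diagonal entries lie in $\Hom(A_i,J_j)$ for $i\neq j$; these groups are torsion-free, and their rationalisations $\Hom^\circ(A_i,J_j)\cong\Hom^\circ(A_i,A_j)$ vanish by the simplicity and non-isogeny of $A_1,A_2$. Hence the off-diagonal entries are zero and $\pi=\pi_1\times\pi_2$ with $\pi_j:A_j\to J_j$, so $\Delta=\ker\pi_1\times\ker\pi_2$. On the other hand, by construction $\Delta=\{(x,\psi(x)):x\in\ker L_1\}$ is the graph of the isomorphism $\psi:\ker L_1\to\ker L_2$, so $\Delta\cap(A_1\times 0)=0$ and $\Delta\cap(0\times A_2)=0$ as group schemes. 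Combined with the product structure this forces $\ker\pi_1=\ker\pi_2=0$, whence $\Delta=0$, and therefore $N^2=|\Delta|=1$, contradicting $N^2>1$.

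I expect the subtler point to be the rigidity used in Step 2: proving that $\pi$ is block-diagonal on the nose, so that $\Delta$ factors as a product of finite group subschemes of $A_1$ and $A_2$, at which point the graph-of-isomorphism property of $\Delta$ forces the contradiction. The isogeny-level decomposition in Step 1 is standard Poincar\'e reducibility, and once the block-diagonal factorisation is in hand the conclusion is immediate.
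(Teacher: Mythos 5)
Your proof is correct and takes essentially the same route as the paper's: write the quotient isogeny as a matrix of morphisms, use simplicity and non-isogeny of $A_1,A_2$ (a consequence of the finiteness of $e(A_1,A_2)$) to kill the off-diagonal entries, and then derive a contradiction between $\ker\alpha$ splitting as a product and $\Delta$ being the graph of the isomorphism $\psi$. Your write-up merely makes the last step more explicit ($\ker\pi_1=\ker\pi_2=0$ forces $\Delta=0$, contradicting $N^2>1$) than the paper's phrasing in terms of the induced embeddings $A_i\to J$.
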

\begin{proof}
Assume that the gluing of the polarizations $\alpha:A_1\times A_2\to J$ is reducible: $J\cong J_1\times J_2$.
Let the composition $\alpha:A_1\times A_2\to J_1\times J_2$ be given by the matrix $\alpha_{ij}:A_i\to J_j$.
Since $A_1$ and $A_2$ are simple and not isogenous, $\alpha$ splits into the product of two
isogenies, say $\alpha_{11}:A_1\to J_1$, and $\alpha_{22}:A_2\to J_2$.
Therefore, the kernel of $\alpha$ is the direct sum of the kernels of $\alpha_{11}$ and $\alpha_{22}$,
but this contradicts the fact that $\alpha$ induces embeddings $A_1\to J$, and $A_2\to J$.
\end{proof}

We will compute $e(A_1,A_2)$ in some cases. For this, we need a lemma.

\begin{lemma}\label{hb_lemma}
Let $f(t)$ be a separable Weil polynomial with a real Weil polynomial $h(t)$, and let $\Lambda$ be a ring.
For any $r\in\Lambda$ there exists a polynomial $\lambda(t)\in\Lambda[t]$ such that 
\[f(t)= \lambda(t)(t^2-rt+q)+ t^{\deg h}h(r).\]
\end{lemma}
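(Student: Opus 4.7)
The plan is to prove the lemma by reducing it to a universal identity in $\ZZ[r][t]$, where $r$ is a formal indeterminate. Both sides of the claimed identity are polynomial expressions in $r$ and $t$ with integer coefficients, so once the identity is established in $\ZZ[r][t]$, specializing $r$ to any element of $\Lambda$ will yield the lemma for an arbitrary ring $\Lambda$.

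Since $t^{2} - rt + q \in \ZZ[r][t]$ is monic in $t$, polynomial division will produce a unique decomposition $f(t) - t^{g} h(r) = \lambda(r,t)(t^{2} - rt + q) + \rho(r,t)$ with $\lambda, \rho \in \ZZ[r][t]$ and $\deg_{t} \rho \leq 1$, where $g = \deg h$. The task then reduces to showing $\rho = 0$.

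To verify this, I would pass to an algebraic closure of $\QQ(r)$ and factor $t^{2} - rt + q = (t - \alpha)(t - \beta)$ with $\alpha + \beta = r$ and $\alpha\beta = q$; the roots are distinct because the discriminant $r^{2} - 4q$ is nonzero in $\QQ(r)$. Using separability of $f$ together with the definition of the real Weil polynomial, one can write $f(t) = \prod_{i=1}^{g}(t^{2} - s_{i} t + q)$ over a splitting field, where $s_{1}, \dots, s_{g}$ are the roots of $h$. The key computation is $\alpha^{2} + q = \alpha(\alpha + \beta) = \alpha r$, which gives $\alpha^{2} - s_{i} \alpha + q = \alpha(r - s_{i})$; multiplying over $i$ yields $f(\alpha) = \alpha^{g} \prod_{i}(r - s_{i}) = \alpha^{g} h(r)$. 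Hence $\rho(\alpha) = 0$, and by the symmetric computation $\rho(\beta) = 0$. A polynomial of $t$-degree at most one with two distinct roots must vanish, so $\rho = 0$, completing the universal identity.

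I do not anticipate a serious obstacle; the substance of the argument is the algebraic identity $\alpha^{2} + q = \alpha r$ together with the product factorization of $f$ afforded by the definition of the real Weil polynomial. The reduction to the universal case is purely formal and removes any need to discuss the structure of $\Lambda$.
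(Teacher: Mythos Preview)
Your proof is correct. Both your argument and the paper's rest on the same factorization $f(t)=\prod_i(t^2-s_it+q)$ and the same elementary identity, which in your language reads $\alpha^2-s_i\alpha+q=\alpha(r-s_i)$ and in the paper's reads $t^2-b_xt+q=(t^2-rt+q)+(r-b_x)t$. The paper simply expands the product $\prod_x\bigl((t^2-rt+q)+(r-b_x)t\bigr)$ directly: every term containing at least one factor $(t^2-rt+q)$ is absorbed into $\lambda(t)(t^2-rt+q)$, and the single remaining term is $\prod_x(r-b_x)t=t^{\deg h}h(r)$. Your route via the universal ring $\ZZ[r]$, division with remainder, and evaluation at the two roots of $t^2-rt+q$ reaches the same conclusion but with more scaffolding; what it buys is that the passage to an arbitrary ring $\Lambda$ is made fully explicit, whereas the paper's one-line expansion leaves that base-change implicit.
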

\begin{proof}
The lemma follows from the equality:
\[f(t)=\prod_x(t-x)(t-q/x)=\prod_x(t^2-b_xt+q)=\prod_x((t^2-rt+q)+(r-b_x)t)=\] 
\[=\lambda(t)(t^2-rt+q)+t^{\deg h}\prod_x(r-b_x)=\lambda(t)(t^2-rt+q)+t^{\deg h}h(r),\]
 where $b_x=x+q/x$, and $h(t)=\prod_x(t-b_x)$.
\end{proof}

\begin{prop}\label{hb}
Let $A$ be a simple abelian variety with real Weil polynomial $h(t)$, and let $B$ be an elliptic curve with real Weil polynomial 
$h_B(t)=t-b$. Suppose that both $f_A$ and $f_B$ are separable. 
Then there exists a natural number $r$ such that $e(A,B)=|h(b)|/p^r$.
Moreover, if $A$ is ordinary and $B$ is supersingular, then $e(A,B)=|h(b)|$. 
\end{prop}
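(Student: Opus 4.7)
The plan is to exploit the algebraic identity $f_A(t)=t^{\deg h}\,h(t+q/t)$, obtained by pairing each Weil number $\pi$ of $A$ with its conjugate $q/\pi$, together with Lemma~\ref{hb_lemma}. I would first establish $e(A,B)\mid h(b)$ at every prime by an endomorphism-theoretic argument, then pin down the $\ell$-part of $e(A,B)$ exactly for each $\ell\neq p$ by exhibiting a concrete common subscheme, and finally analyse the $p$-part under the ordinary/supersingular hypothesis by a direct valuation computation.

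For the divisibility, I would take any finite group scheme $\Delta$ lying simultaneously in $\IK_A$ and $\IK_B$. Embedding $\Delta\hookrightarrow A'\sim A$, functoriality of Frobenius and Verschiebung (the latter because $\phi V_{A'}=V_{A''}\phi$ is forced by $FV=[q]$ together with the fact that $F_{A''}$ is not a zero divisor in $\Hom$) makes both $F$ and $V$ preserve $\Delta$, so $F+V$ acts on $\Delta$. From the identity $f_A(F)=F^{\deg h}\,h(F+V)$ in $\End(A')$ and $f_A(F)=0$ one obtains $h(F+V)=0$ in $\End(A')$, since $F$ is a non-zero divisor in the torsion-free ring $\End(A')$. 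Now embedding the same $\Delta$ into some $B'\sim B$, the relation $f_B(t)=t^2-bt+q$ gives $F+V=b\cdot\id_{B'}$. Comparing the two actions on $\Delta$ yields $h(b)\cdot\Delta=0$, hence $e(A,B)\mid h(b)$.

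For the equality at $\ell\neq p$, I would construct the explicit common subscheme
\[
\Delta_\ell\;=\;\mathbb{Z}_\ell[t]\big/\bigl(f_B(t),\,h(b)\bigr),
\]
viewed as an \'etale $\ell$-primary group scheme over $k$ with Frobenius acting by multiplication by $t$. Then $f_B(F)$ manifestly annihilates $\Delta_\ell$, so $\Delta_\ell\in\IK_B$; and Lemma~\ref{hb_lemma} applied with $\Lambda=\mathbb{Z}_\ell$ and $r=b$ gives $f_A(t)\equiv t^{\deg h}h(b)\equiv 0\pmod{(f_B(t),h(b))}$, so $\Delta_\ell\in\IK_A$. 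Since $\mathbb{Z}_\ell[t]/(f_B)$ is free of rank $2$ over $\mathbb{Z}_\ell$ and $h(b)=\ell^{v_\ell(h(b))}\cdot(\text{unit})$, the abelian group $\Delta_\ell$ has exponent exactly $\ell^{v_\ell(h(b))}$, meeting the upper bound from the previous step. Thus $v_\ell(e(A,B))=v_\ell(h(b))$ for every $\ell\neq p$, so $|h(b)|/e(A,B)$ is a power of $p$, proving the first assertion.

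For the ``moreover'' part it suffices to show $v_p(h(b))=0$, which then forces $r$ to be zero. Ordinariness of $A$ means each conjugate pair $(\pi_i,q/\pi_i)$ of Weil numbers consists of one $p$-adic unit and one element of $p$-adic valuation $v_p(q)$, so every $b_i=\pi_i+q/\pi_i$ is a $p$-adic unit. Supersingularity of $B$ means $v_p(\pi_B)=v_p(q)/2>0$, hence $v_p(b)>0$. Each factor $b-b_i$ is therefore a $p$-adic unit, and so is $h(b)=\prod_i(b-b_i)$. The main technical obstacle I anticipate is the functoriality argument for $V$ on subgroup schemes used in the divisibility step, together with the breakdown of the concrete construction at $\ell=p$: Frobenius need not be invertible on $p$-primary group schemes, which is precisely why only the divisibility $e(A,B)\mid h(b)$ survives at $p$ in general and why extra input (such as the ordinary/supersingular split) is needed to eliminate the $p$-contribution.
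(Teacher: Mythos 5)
Your overall architecture matches the paper's: the divisibility $e(A,B)\mid h(b)$, an explicit common $\ell$-primary subscheme of exponent exactly $\ell^{v_\ell(h(b))}$ for each $\ell\neq p$, and a $p$-adic unit computation for the ordinary/supersingular case. Your divisibility argument (a self-contained version of the reduced-resultant argument that the paper delegates to Howe--Lauter) and your proof of the ``moreover'' part are correct and essentially identical to the paper's.

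The genuine gap is the step ``$f_A(t)\equiv 0 \pmod{(f_B(t),h(b))}$, so $\Delta_\ell\in\IK_A$.'' Membership in $\IK_A$ means that $\Delta_\ell$ embeds as a subgroup scheme of some abelian variety isogenous to $A$, and this is \emph{not} implied by $f_A(F)$ annihilating $\Delta_\ell$: for example, $(\ZZ/\ell\ZZ)^N$ with $F$ acting by a root of $f_A$ modulo $\ell$ is killed by $f_A(F)$ for every $N$, yet cannot embed once $N$ exceeds the multiplicity of that eigenvalue on $A[\ell]$. What is actually required is an $F$-stable lattice (or lattice chain) in $V_\ell(A)$ realizing $\Delta_\ell$ as the kernel of an isogeny within the class, and producing it is precisely the technical heart of the paper's proof: a cyclic vector $v$ (this is where separability of $f_A$ is used), the recursively defined polynomials $f_r$, and the lattice $T$ spanned by $v$, $Fv$, and $\ell^{-n}f_r(F)v$, whose $F$-stability is verified using the congruence $f_A\equiv\lambda f_B\bmod\ell^n$ from Lemma~\ref{hb_lemma}. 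The same logical leap occurs in ``$f_B(F)$ annihilates $\Delta_\ell$, so $\Delta_\ell\in\IK_B$,'' though there it is harmless because $\Delta_\ell=B'[\ell^n]$ for the curve $B'$ with $T_\ell(B')\cong\ZZ_\ell[t]/f_B(t)\ZZ_\ell[t]$, which is exactly what the paper uses. Your gap on the $A$-side is fillable --- for instance, rescale a cyclic lattice $T_0\cong\ZZ_\ell[t]/f_A(t)\ZZ_\ell[t]$ into $T_\ell(A)$ and observe that $T_0/(f_B(F)T_0+\ell^nT_0)\cong\ZZ_\ell[t]/(f_B(t),\ell^n)$ by your congruence, which exhibits $\Delta_\ell$ as the kernel of an isogeny between two members of the class via Lemma~\ref{k_iso_lemma} --- but as written the embedding into the isogeny class of $A$ is asserted rather than proved, and it is the one nontrivial point of the whole proposition.
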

\begin{proof}
There exists a polynomial $a(t)\in\ZZ[t]$ such that \[h(t)=a(t)(t-b)+h(b).\]
Therefore, the reduced resultant of  $h(t)$ and $h_B(t)$ is equal to $h(b)$~\cite[Lemma 2.7]{HL}.
According to~\cite[Proposition 2.8]{HL}, $e(A,B)$ divides $h(b)$. Hence, it is enough to prove that for every prime $\ell\neq p$, if 
$\ell^n$ divides $h(b)$, then there exists an $\ell$-primary group scheme $\Delta$ such that $\ell^{n-1}\Delta\neq 0$, and there are
homomorphisms $\Delta\to A'$ and $\Delta\to B'$, where $A'$ is isogenous to $A$ and $B'$ is isogenous to $B$.

By the Lemma~\ref{k_iso_lemma}, there exists an elliptic curve $B'$ such that
\[T_\ell(B')\cong \ZZ_\ell[t]/f_B(t)\ZZ_\ell[t],\] where $F_B$ acts on the right as multiplication by $t$.
  Put $\Delta=B'[\ell^n]$. 
To prove the proposition we now have to construct a submodule
$T\subset V_\ell(A)$ such that $\P(\Delta)$ is a submodule of $T/\ell^n T$, and $T$ generates $V_\ell(A)$; 
then, according to Lemma~\ref{k_iso_lemma}, there exists an abelian variety $A'$ such that $T_\ell(A')\cong T$,
 and $\Delta$ is isomorphic to a submodule of $A'[\ell^n]$.

Define a sequence of polynomials \[f_r(t)=t^r-\alpha_rt+\beta_r\in\ZZ[t]\] 
for $r\geq 2$ as follows: $f_2=f_B$, and \[f_{r+1}(t)=tf_r(t)+\alpha_rf_B(t).\]
 Clearly, $f_r(F_B)=0$ for all $r\geq 2$. 
Since $f_A$ is separable, there exists a cyclic vector $v\in V_\ell(A)$, i.e., $v, Fv,\dots, F^{d-1}v$ is a basis, where $d=\deg f_A$.
Let $v_0=v, v_1=Fv$, and \[v_r=\frac{1}{\ell^n}f_r(F)v\] for $d>r\geq 2$. 
According to Lemma~\ref{hb_lemma} we have: \[f_A(t)\equiv \lambda(t)f_B(t)\bmod\ell^n\] for some $\lambda(t)\in\ZZ[t]$; 
using this relation it is straightforward to check that $v_0,\dots, v_{d-1}$ is a basis of an $F$-invariant submodule $T\subset V_\ell(A)$. 
Moreover, by induction we get that \[f_r(F)v\equiv 0\bmod\ell^n.\] In other words, $v$ and $Fv$ generate a submodule of $T/\ell^n T$ isomorphic to $\P(\Delta)$. 

If $A$ is ordinary, then $h(0)$ is a $p$-adic unit. Moreover, if $B$ is supersingular, then $p$ divides $b$; therefore,
$h(b)\equiv h(0)\bmod p$, and $v_p(h(b))=v_p(e(A,B))=0$. It follows that $e(A,B)=|h(b)|$. 
\end{proof}

There is a simple way to prove that polarizations on two abelian varieties $A$ and $B$ cannot be glued together.
Let $\ell$ be a prime divisor of $e(A,B)$. If $B$ is supersingular, and all the roots of $f_B$ are real, 
then $B[\ell]$ is semi-simple. If, on the other hand, any kernel of a polarization on $A$ annihilated by $e(A,B)$ is not
semi-simple, then there is no gluing of polarizations on $A$ and $B$. The following proposition allows one to prove 
that the kernel of a polarization is not semi-simple.

\begin{prop}\label{main_prop}
Let $A$ be an abelian variety with polarization $L$, and let $\Delta$ be the kernel of $L$.
Assume that the Weil polynomial $f_A$ of $A$ is separable.
Let $\p$ be a symmetric maximal ideal of $R_A$ such that
\begin{enumerate}
\item $p\not\in\p$;
\item $R_A$ is maximal at $\p$;
\item $\p^+$ is ramified in the extension $K/K^+$.
\end{enumerate}
If the support of $\Delta$ contains $\p$, then $\P(\Delta_\p)\cong R_A/\p^a$,
 where $a=\length(\Delta_\p)>1$.
In particular, $\Delta_\p$ is not semi-simple and 
$(\p^+)^{\left\lceil a/2\right\rceil}\Delta_\p=0$.
\end{prop}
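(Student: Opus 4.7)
The plan is to compute $\P(\Delta_\p)$ directly as the cokernel of $\phi_L$ on a single isotypic component of the $\ell$-adic Tate module, where $\ell$ is the residue characteristic of $\p$, and then to read off the structural claims from the ramification hypothesis.

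Since $p\notin\p$, the scheme $\Delta_\p$ is \'etale, so $\P(\Delta_\p)=\Delta_\p(\kk)$, and it identifies with the $\p$-component of $\ker L(\kk)_\ell\cong T_\ell(\check A)/\phi_L(T_\ell(A))$. Separability of $f_A$ implies $K=R_A\otimes\QQ$ is a product of fields, so $V_\ell(A)$ decomposes as $\bigoplus_{\p'\mid\ell}V_{\p'}$ with each $V_{\p'}$ one-dimensional over the completion of $K$ at $\p'$. Maximality of $R_A$ at $\p$ then forces $T_\ell(A)_\p$ and $T_\ell(\check A)_\p$ to be free of rank one over the local ring $R_\p$ (a DVR because $p\not\in\p$).

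The polarization $\phi_L$ therefore restricts at $\p$ to an $R_\p$-linear map between two free rank-one modules, hence multiplication by some nonzero $\lambda\in R_\p$, giving
\[
\P(\Delta_\p)\;\cong\;R_\p/\lambda R_\p,
\]
which is automatically cyclic. Self-duality of the polarization (equivalently, the alternating Weil pairing on $\ker L$) pins $\lambda$ down to a Rosati-symmetric element, so after multiplication by a unit one has $\lambda\in R^+_{\p^+}$. The ramification hypothesis says that a uniformizer $\pi^+$ of $R^+_{\p^+}$ generates $\p^2 R_\p$; writing $\lambda=u(\pi^+)^b$ for a unit $u$ and integer $b\geq 0$, one obtains $\lambda R_\p=\p^{2b}$, so $a=2b$ is even and $\P(\Delta_\p)\cong R_A/\p^a$. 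Since $\p$ lies in the support of $\Delta$, $b\geq 1$ and hence $a\geq 2$; the module is cyclic but not semi-simple (it is not annihilated by $\p$), and $(\p^+)^{\lceil a/2\rceil}=(\p^+)^b$ annihilates $R_A/\p^{2b}$.

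The main obstacle is the Rosati-symmetry step: showing that $\phi_L$ at $\p$ is multiplication by an element of $R^+_{\p^+}$. This requires matching the $R_A$-module structures on $T_\ell(\check A)_\p$ and $T_\ell(A)_\p$ through the Weil pairing (under which the $R_A$-action on the dual side is twisted by the involution $F\leftrightarrow V$), and then invoking that polarizations are exactly the isogenies $A\to\check A$ self-dual under this identification. Without this symmetry one would only conclude cyclicity of arbitrary length $a$, and the ramification hypothesis could not be used to force $a$ to be even.
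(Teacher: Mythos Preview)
Your cyclicity argument matches the paper's (isolated there as Lemma~\ref{NonSS}): maximality of $R_A$ at $\p$ together with separability of $f_A$ makes $T_\ell(A)_\p$ free of rank one over the DVR $R_\p$, so every quotient is of the form $R_A/\p^a$.

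For $a>1$ the paper takes a different route and sidesteps your acknowledged obstacle entirely. Rather than locating $\lambda$ inside $R^+_{\p^+}$, it invokes Lemma~\ref{key_rem}: by Howe's Theorem~\ref{Howe_thm} the class $\eps([\Delta])$ lies in $Z(R)$, and localizing (Remark~\ref{loc_eq}) gives $\eps([\Delta_\p])=a[R/\p]_R\in Z(R)$. Ramification of $\p^+$ forces $R/\p\cong R^+/\p^+$, so $[R/\p]_R$ has nonzero image in $G(\Mod_{R^+})\otimes\ZZ/2\ZZ$ and therefore lies outside $Z(R)$; hence $a$ must be even (in particular $a>1$). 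The annihilation statement then follows from $\p^+ R_\p=\p^2$.

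Your direct Rosati route can be completed, but the step you flag is genuinely the crux and is not as immediate as ``polarizations are self-dual, hence $\lambda=\bar\lambda$''. Concretely, writing the Riemann form as $E(x,y)=\tr_{K_\p/\QQ_\ell}(\xi\,\bar x\,y)$ with $\bar\xi=-\xi$, one finds that $a=v_\p(\xi)+v_\p(\mathfrak{d}_{K_\p/\QQ_\ell})$, so the claim ``$\lambda\in R^+_{\p^+}$ up to units'' becomes the assertion that $v_\p(\xi)$ and $v_\p(\mathfrak{d})$ have the same parity --- equivalently, that the relative different $\mathfrak{d}_{K_\p/K^+_{\p^+}}$ is generated by a purely imaginary element. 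For $\ell$ odd this drops out of the decomposition $\OO_{K_\p}=\OO_{K^+_{\p^+}}\oplus(\OO_{K_\p}\cap K^-)$; at $\ell=2$ one needs a short extra check. The paper's appeal to $Z(R)$ packages exactly this parity constraint abstractly (it is the evenness of $R^+$-length for kernels of polarizations), trading a local computation for Howe's machinery already in place.
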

  \begin{proof}
According to Lemma~\ref{NonSS}, $\P(\Delta_\p)\cong R_A/\p^a$, where $a=\length(\Delta_\p)$. 
Since $\p^2=\p^+\OO$, we have 
\[(\p^+)^{\left\lceil a/2\right\rceil}\Delta_\p\subset\p^a\Delta_\p=0.\]

Since	$\p^+$ is ramified, $R/\p\cong R^+/\p^+$.
It follows from Lemma~\ref{key_rem} that $\P(\Delta_\p)\not\cong R_A/\p$.
Therefore, if the support of $\Delta$ contains $\p$, then the multiplicity of $[R_A/\p]_{R_A}$ in $\eps([\Delta])$ is at least two, that is, $a>1$.
The proposition follows.
\end{proof}

\begin{lemma}\label{NonSS}
Let $A$ be an abelian variety with a finite subgroup scheme $\Delta$.
Assume that the Weil polynomial $f_A$ of $A$ is separable.
Let $\p$ be a maximal ideal of $R_A$ such that $p\not\in\p$ and the order $R_A$ is maximal at $\p$.
If the support of $\Delta$ contains $\p$, then $\P(\Delta_\p)\cong R_A/\p^a$,
where $a=\length(\Delta_\p)$.
\end{lemma}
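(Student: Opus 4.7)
The plan is to exploit the fact that, under the given hypotheses, the $\p$-local Tate module is free of rank one over a discrete valuation ring, so that any finite submodule of its divisible hull is automatically cyclic. First I would observe that since $p\notin\p$, the prime $\p$ lies over a rational prime $\ell\neq p$, hence $\Delta_\p$ is \'etale and $\P(\Delta_\p)=\Delta_\p(\kk)$. This finite $R_A$-module is supported at $\p$, and it embeds naturally into the $\p$-localization of $A(\kk)[\ell^\infty]\cong V_\ell(A)/T_\ell(A)$.

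Next I would show that $T_\ell(A)_\p$ is a free $(R_A)_\p$-module of rank one. Maximality of $R_A$ at $\p$ makes $(R_A)_\p$ a discrete valuation ring. Separability of $f_A$ means the Frobenius acts with simple eigenvalues, so $V_\ell(A)$ is cyclic over $K_A\otimes\QQ_\ell=\QQ_\ell[F]$; a $\QQ_\ell$-dimension count (both sides equal $2\dim A$) upgrades cyclicity to freeness of rank one. Localizing at $\p$, $V_\ell(A)_\p$ becomes free of rank one over the fraction field of $(R_A)_\p$, and a finitely generated torsion-free lattice inside it over the DVR $(R_A)_\p$ must itself be free of rank one.

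Consequently $V_\ell(A)_\p/T_\ell(A)_\p$ is isomorphic, as an $(R_A)_\p$-module, to the Pr\"ufer module obtained by quotienting the fraction field of $(R_A)_\p$ by $(R_A)_\p$, which is uniserial; every finitely generated submodule therefore has the form $\varpi^{-a}(R_A)_\p/(R_A)_\p\cong R_A/\p^a$, where $\varpi$ is a uniformizer. Applied to $\Delta_\p(\kk)$, this yields $\P(\Delta_\p)\cong R_A/\p^a$, and matching lengths identifies $a$ with $\length(\Delta_\p)$. The delicate point is the rank-one freeness of $T_\ell(A)_\p$, where both separability of $f_A$ and maximality of $R_A$ at $\p$ enter essentially: dropping either hypothesis would destroy uniseriality of the divisible quotient and the cyclic conclusion would fail.
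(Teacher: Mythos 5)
Your proposal is correct and follows essentially the same route as the paper: separability of $f_A$ gives that $V_\ell(A)$ is free of rank one over $R_A\otimes\QQ_\ell$, maximality of $R_A$ at $\p$ makes $(R_A)_\p$ a discrete valuation ring over which $T_\ell(A)_\p$ is free of rank one, and cyclicity of $\P(\Delta_\p)\cong R_A/\p^a$ follows. The only (cosmetic) difference is that you realize $\Delta_\p(\kk)$ as a finite submodule of the uniserial quotient $V_\ell(A)_\p/T_\ell(A)_\p$, whereas the paper presents it as a quotient $T_\ell(A)_\p/(T_M)_\p$ of lattices.
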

\begin{proof}
The group scheme $\Delta_\ell$ corresponds to a quotient of the Tate module $M=T_\ell(A)/T_M$.
Since the Weil polynomial $f_A$ of $A$ is square-free, the module $V_\ell(A)$ is free of rank $1$ over $R_A\otimes\QQ$.
Since the order of $R_A$ is maximal at $\p$, we find that $T_\ell(A)_\p$ is free of rank $1$ over $R_\p=(R_A)_\p$.
Therefore, the module $M_\p$ is isomorphic to the quotient of $R_\p$ by some power of the maximal ideal: $M_\p\cong R_\p/\p^a$.
\end{proof}

As an application, we reprove a result E.Howe and K.Lauter~\cite{HL}. 
We use the following particular case of the Dedekind criterion.

\begin{thm}[Dedekind]\label{Dedekind}
Let $f\in\ZZ_\ell[t]$ be a polynomial such that \[f(t)= g(t)^r+\ell\alpha(t),\] 
where $g(t)\in\ZZ_\ell[t]$ is irreducible modulo $\ell$, and $\alpha\in\ZZ_\ell[t]$.
Then the ring $\Lambda=\ZZ_\ell[t]/f(t)\ZZ_\ell[t]$ is regular if and only if either $r=1$, or $r>1$ and the polynomials $\alpha(t)$ and $g(t)$ are coprime modulo $\ell$.
\end{thm}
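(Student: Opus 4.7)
The plan is to apply the standard criterion that a hypersurface in a two-dimensional regular local ring is regular at a closed point if and only if its defining element does not lie in the square of the ambient maximal ideal.

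First I would verify that $\Lambda$ is local with maximal ideal $\m=(\ell,g(t))\Lambda$. Since $g$ is irreducible modulo $\ell$, the reduction $\Lambda/\ell\Lambda\cong\FF_\ell[t]/g(t)^r\FF_\ell[t]$ is a local Artinian ring with unique maximal ideal $(g)$. The ring $\Lambda$ is finite over $\ZZ_\ell$ and of Krull dimension one, so every maximal ideal of $\Lambda$ contains $\ell$, which forces $\m$ to be the unique maximal ideal. Consequently, $\Lambda$ is a regular ring if and only if the localization $\Lambda_\m=\Lambda$ is a regular local ring.

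Next I would realize $\Lambda$ as the quotient $T/(f)$ of the two-dimensional regular local ring $T=\ZZ_\ell[t]_{(\ell,g)}$ by the principal ideal $(f)$, and denote the maximal ideal of $T$ by $\mathfrak n=(\ell,g)T$. The standard regularity criterion for hypersurfaces in a regular ambient then yields: $\Lambda$ is regular if and only if $f\notin\mathfrak n^2$. A short calculation in the UFD $\ZZ_\ell[t]$, using that $\ell$ and $g$ are non-associate primes, produces the intersection formula $\mathfrak n^2\cap(\ell)T=(\ell^2,\ell g)T$; equivalently, for any $\beta\in\ZZ_\ell[t]$ one has $\ell\beta\in\mathfrak n^2$ if and only if $\beta\in\mathfrak n$, that is, if and only if $g$ divides $\beta$ modulo $\ell$.

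Finally I would combine these observations with the decomposition $f=g^r+\ell\alpha$. When $r=1$, the element $g$ belongs to a regular system of parameters for $T$, so $g\notin\mathfrak n^2$ and hence $f\notin\mathfrak n^2$, giving regularity unconditionally. When $r>1$, we have $g^r\in(g^2)\subset\mathfrak n^2$, so $f\in\mathfrak n^2$ if and only if $\ell\alpha\in\mathfrak n^2$, which by the previous step is in turn equivalent to $g$ dividing $\alpha$ modulo $\ell$. Thus $\Lambda$ is regular precisely when $g$ and $\alpha$ are coprime modulo $\ell$, as required. The only calculation needing some care is the intersection formula $\mathfrak n^2\cap(\ell)T=(\ell^2,\ell g)T$, but this is immediate from the fact that $\ell$ and $g$ are coprime primes in the UFD $\ZZ_\ell[t]$.
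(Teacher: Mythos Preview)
Your argument is correct. The only implicit hypothesis you use that is not literally stated is that $f$ is monic (or at least has unit leading coefficient), so that $\Lambda$ is finite over $\ZZ_\ell$ and hence local; this is harmless, since every application in the paper is to monic Weil-type polynomials and the classical Dedekind criterion is stated for monic $f$ anyway. One stylistic remark: in the $r=1$ case your sentence ``$g\notin\mathfrak n^2$ and hence $f\notin\mathfrak n^2$'' is a little compressed; the cleanest way to say it is that $\bar\ell,\bar g$ form a $\kappa$-basis of $\mathfrak n/\mathfrak n^2$, and the image of $f=g+\ell\alpha$ there has $\bar g$-coordinate equal to $1$, so it cannot vanish.

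As for comparison with the paper: the paper does not give an argument at all, it simply cites Cohen~\cite[Theorem 6.1.2]{Cohen}. That reference proves the general Dedekind criterion for $\ZZ[t]/(f)$ over $\ZZ$ by an explicit computation with the conductor and the $p$-radical, then specializes. Your route via the hypersurface criterion in the two-dimensional regular local ring $T=\ZZ_\ell[t]_{(\ell,g)}$ is genuinely different and, for the local statement needed here, cleaner and more conceptual: it reduces everything to the single linear-algebra fact that $f\notin\mathfrak n^2$, and it makes transparent why the condition ``$\alpha$ and $g$ coprime mod $\ell$'' is exactly the regularity obstruction. The cost is that your method is tailored to the totally-ramified-mod-$\ell$ situation $f\equiv g^r$, whereas Cohen's formulation handles an arbitrary factorization $f\equiv\prod g_i^{e_i}\bmod\ell$ in one stroke; but that extra generality is not used in this paper.
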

\begin{proof}
    Follows from~\cite[Theorem 6.1.2]{Cohen}.
\end{proof}

\begin{thm}\label{HL}\cite[Theorem 3.1]{HL}
Suppose that $q$ is a square.
Let $A$ be an ordinary abelian variety with a real Weil polynomial $h(t)$, and let $B$
be a power of a supersingular elliptic curve with a real Weil polynomial $(t-2s)^n$, where
$s^2=q$, and $n>0$. 
If the number $h(2s)$ is squarefree, then there is no variety 
in the isogeny class of $A\times B$ with irreducible principal polarization.
In particular, there is no curve over $k$ with the real Weil polynomial equal to $h(t)(t-2s)^n$.
\end{thm}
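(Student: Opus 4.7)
The plan is to derive a contradiction from the assumption that an abelian variety $C$ in the isogeny class of $A\times B$ carries an irreducible principal polarization, by combining the Serre gluing formalism of Section~\ref{S3} with Proposition~\ref{main_prop}.

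Since $A$ is ordinary and $B$ is supersingular, $\Hom(A_0,B_0)=0$ for any $A_0\sim A$ and $B_0\sim B$, so $C\sim A_0\times B_0$, and a principal polarization $L$ on $C$ corresponds to a Serre gluing of polarizations $L_{A_0}$ and $L_{B_0}$ of equal degree along an anti-isometry $\psi\colon\ker L_{A_0}\xrightarrow{\sim}\ker L_{B_0}$. If $\ker L_{A_0}=0$, then $L$ is a product of two principal polarizations and hence reducible; assume therefore that $\Delta:=\ker L_{A_0}$ is nonzero. By Proposition~\ref{hb}, applied to the supersingular elliptic curve $E$ underlying $B\sim E^n$, the group scheme $\Delta$ is annihilated by $|h(2s)|$. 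The disjoint Newton slopes of $A$ and $B$ rule out any $p$-primary part of $\Delta$, so some prime $\ell\neq p$ with $\ell\mid h(2s)$ divides $|\Delta|$.

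Next I would analyze the prime $\p=(\ell,F-s,V-s)\subset R_A$. On $T_\ell(B_0)\cong T_\ell(E)^n$ Frobenius acts as the scalar $s$, so via $\psi$ the operator $F-s$ annihilates $\Delta_\ell\cong\ker L_{B_0,\ell}$. Squarefreeness of $h(2s)$ yields $f_A(t)\equiv(t-s)^2 g(t)\pmod\ell$ with $g(s)\not\equiv 0\pmod\ell$, so $\p$ is the unique prime of $R_A$ over $\ell$ at which $F\equiv s$; hence the support of $\Delta_\ell$ is contained in $\{\p\}$ and $\Delta_\p\ne 0$. For odd $\ell$ the corresponding quadratic factor $t^2-b_1t+q$ of $f_A$ has discriminant $(b_1-2s)(b_1+2s)$ of $\ell$-valuation exactly~$1$; the Dedekind criterion (Theorem~\ref{Dedekind}) then shows it to be irreducible over $\ZZ_\ell$, so the local ring $R_{A,\p}$ is the maximal order in a tamely ramified quadratic extension $K_{A,\p}/\QQ_\ell$. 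This verifies all three hypotheses of Proposition~\ref{main_prop}: $p\notin\p$, $R_A$ is maximal at $\p$, and $\p^+$ ramifies in $K_A/K_A^+$.

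Proposition~\ref{main_prop} then forces $\P(\Delta_\p)\cong R_A/\p^a$ with $a\ge 2$, so Frobenius acts on $\Delta_\p$ as multiplication by $\pi_1\bmod\p^a$, where $\pi_1-s$ is a uniformizer of $R_{A,\p}$ and is nonzero modulo $\p^a$. Consequently $F-s$ does not annihilate $\Delta_\p$, contradicting the previous paragraph. Hence $\Delta=0$ and every principal polarization in the isogeny class of $A\times B$ is reducible. The final clause then follows because the theta polarization on the Jacobian of a smooth curve of genus at least two is irreducible, so a curve with real Weil polynomial $h(t)(t-2s)^n$ would produce an irreducible principal polarization in the forbidden isogeny class. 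I expect the main obstacle to be the case $\ell=2$, which can only arise when $2\mid h(2s)$: there $v_2(b_1^2-4q)$ may be even, the local quadratic can split instead of ramify, and Proposition~\ref{main_prop} no longer applies directly, so one must either secure an odd prime divisor of $h(2s)$ or argue via Howe's obstruction in the exact sequence~$(*)$.
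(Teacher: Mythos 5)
Your overall strategy is the paper's: reduce to a nontrivial kernel $\Delta$ of a polarization on some $A_0$ isogenous to $A$ on which Frobenius acts as the scalar $s$, and contradict this via Proposition~\ref{main_prop} at the prime $\p=(\ell,F-s,V-s)$. The genuine gap is in your verification of the hypotheses of Proposition~\ref{main_prop}. You assert that squarefreeness of $h(2s)$ yields $f_A(t)\equiv(t-s)^2g(t)\bmod\ell$ with $g(s)\not\equiv0$, so that the local factor of $f_A$ at $\p$ is a single quadratic $t^2-b_1t+q$ with $b_1\in\ZZ_\ell$ whose discriminant has $\ell$-valuation exactly one. This does not follow: squarefreeness only forces $v_\ell(h(2s))=1$, not that $(t-2s)$ divides $h\bmod\ell$ with multiplicity one. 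For example $h$ may have an irreducible factor congruent to $(t-2s)^2\bmod\ell$ with that factor contributing exactly $\ell$ to $h(2s)$ (e.g.\ a factor $(t-2s)^2-\ell$); then the multiplicity of $(t-s)$ in $f_A\bmod\ell$ is $4$, the relevant Hensel factor $f_\p$ has degree $4$, there is no quadratic factor over $\ZZ_\ell$, and your discriminant computation has no meaning. The paper's argument avoids this entirely: it writes $f_\p=(t-s)^{2r}+\ell\alpha(t)$ for arbitrary $r$, uses Lemma~\ref{hb_lemma} to get $f_A(s)=s^{\deg h}h(2s)$, hence $f_\p(s)\not\equiv0\bmod\ell^2$, and applies the Dedekind criterion in that form; maximality of $R_\p$ then forces $K_\p$ to be totally ramified of degree $2r$ over $\QQ_\ell$, so $\p^+$ ramifies in $K/K^+$. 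This works uniformly in $\ell$, in particular for $\ell=2$, which you explicitly leave unresolved --- and which cannot be dodged by ``securing an odd prime divisor,'' since $h(2s)=\pm2$ is possible.

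Two smaller points. Proposition~\ref{hb} is not applicable here because $f_B=(t-s)^{2n}$ is inseparable (the Remark following Theorem~\ref{HL} stresses exactly this); the fact you actually need --- that every prime in the support of $\Delta$ divides $h(2s)$ --- follows instead from the reduced resultant bound of \cite[Proposition 2.8]{HL}, or directly from $f_A(s)=s^{\deg h}h(2s)$ once you know that $F$ acts as $s$ on $\Delta$. Your reduction to a Serre gluing and the exclusion of the $p$-primary part via Newton polygons are fine and correspond to the paper's appeal to \cite[Lemma 2.3]{HL}.
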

\begin{proof}
Assume that there is an irreducible principal polarization on a variety $J$ in the isogeny class of $A\times B$.
According to~\cite[Lemma 2.3]{HL} there exist abelian varieties $A'$ and $B'$ in the 
isogeny class of $A$ and $B$ respectively, and a non-trivial group scheme $\Delta$ with monomorphisms to $A'$ and $B'$.
Moreover, $\Delta$ is isomorphic to the kernel of a polarization on $A'$.

Since $\Delta$ is a subgroup scheme of $B'$, the Frobenius action on $\Delta$ is constant.
 We are going to apply Proposition~\ref{main_prop} to $A'$ and show that the Frobenius action on
$\Delta$ is non-constant. This contradiction proves the theorem.

Let $\ell$ be a prime divisor of $h(2s)$. 
Since $A$ is ordinary, $\ell\neq p$, and since $h(2s)$ is squarefree, $f_A$ is separable.
Denote by $\p\subset R_A$ the ideal generated by $F-s, V-s$, and  $\ell$. Clearly, $R_A/\p\cong\FF_\ell$; 
therefore, $\p$ is prime and symmetric. The localization $R_\p$ of $R_A$ at $\p$ is isomorphic to the 
quotient $\ZZ_\ell[t]/f_\p$, where \[f_\p-(t-s)^{2r}=\ell\alpha(t)\] for some $\alpha(t)\in\ZZ_\ell[t]$. 
According to the Dedekind criterion, $R_\p$ is maximal if
and only if $(t-s)$ is coprime to $\alpha(t)$ modulo $\ell$. This is equivalent to 
\[\alpha(s)\not\equiv 0\bmod\ell\text{, and }f_\p(s)\not\equiv 0\bmod\ell^2.\] 
Apply Lemma~\ref{hb_lemma} with $r=2s$. We get that $f_A(s)=s^{\dim A}h(2s)$.
Since $h(2s)$ is squarefree, $f_\p(s)\not\equiv 0\bmod\ell^2$, and $R_\p$ is maximal.
It is now clear, that $\p^+$ is ramified. According to Proposition~\ref{main_prop}, 
the Frobenius action on $\Delta(\kk)$ is non-constant. The theorem is proved.
 \end{proof}

\begin{rem}
The separability of $f_B$ is crucial for Proposition~\ref{hb}. 
In fact, from the proof of Theorem~\ref{HL} it follows that $e(A,B)\neq |h(b)|$, where $b=2s$.
One can use~\cite{Ry14} and prove that if $f_A$ is separable, and $f_B=(t-s)^2$, where $s^2=q$, then for any prime $\ell\neq p$ we have:
$\ell^n$ divides $e(A,B)$ if and only if $\ell^{2n}$ divides $f(s)$, and $\ell^n$ divides $f'(s)$, where $f'$ is the derivative of $f$.
\end{rem}

We now apply the same technique to a product of a supersingular and a non-supersingular surface.
Let $\ell\neq 2,p$ be a prime, and let $s=\pm\sqrt{q}$.
The ring $\Lambda_\ell=\ZZ_\ell[t]/(t^2-q)\ZZ_\ell[t]$ is either $\ZZ_\ell^2$ or 
the ring of integers in an unramified extension of $\QQ_\ell$. 
We say that $\ell$ divides $h(2s)$ if $\ell$ divides $h(2s)$ in $\Lambda_\ell$.

\begin{thm}\label{HL2}
Suppose that $q$ is not a square.
Let $A$ be an ordinary abelian surface with the real Weil polynomial $h(t)=t^2+a_1t+a_2-2q$, and let $B$
be a supersingular simple abelian surface with the Weil polynomial $(t^2-q)^2$.
Assume that if $\ell$ is any prime divisor of $h(2s)$, then $\ell$ is even, and $\ell^2$ does not divide $h(2s)$.
Then there is no variety in the isogeny class of $A\times B$ with irreducible principal polarization.
In particular, there is no curve over $k$ with the Weil polynomial equal to $f_A(t)(t^2-q)^2$.
\end{thm}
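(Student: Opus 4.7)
The plan is to follow the structure of the proof of Theorem~\ref{HL} closely, adapting it to the case where $B$ is a simple supersingular abelian surface with Weil polynomial $(t^2-q)^2$ rather than a power of a supersingular elliptic curve. Suppose for contradiction that the isogeny class of $A\times B$ contains an abelian variety $J$ with an irreducible principal polarization. By~\cite[Lemma~2.3]{HL}, there exist abelian varieties $A'\sim A$ and $B'\sim B$ and a nontrivial finite subgroup scheme $\Delta\cong\ker L$ on $A'$ admitting monomorphisms into both. Ordinarity of $A$ forces $A[p^\infty]$ to have trivial local--local part, while supersingularity of $B$ forces $B[p^\infty]$ to be entirely local--local, so $\Delta$ has no $p$-part. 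For any prime $\ell\neq p$ in the support of $\Delta$, the bound~\cite[Proposition~2.8]{HL} applied to the real Weil polynomials $h$ of $A$ and $h_B(t)=t^2-4q$ of $B$ shows that $\ell$ divides $h(2s)$ inside $\Lambda_\ell$; the hypothesis then forces $\ell=2$, so $\Delta$ is $2$-primary.

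Next, the embedding $\Delta\hookrightarrow B'$ yields the crucial constraint $F^2=q$ on $\Delta$: since $B$ is simple of dimension two with $f_B(t)=(t^2-q)^2$, the algebra $\End^\circ(B)$ is a quaternion division algebra over $\QQ(\sqrt{q})$ in which $F^2-q=0$, hence this relation holds on any subscheme of $B'$. Writing $s\in\Lambda_2$ for a square root of $q$, Lemma~\ref{hb_lemma} applied with $r=2s$ gives $f_A(t)=\lambda(t)(t^2-2st+q)+t^{\deg h}h(2s)$ in $\Lambda_2[t]$. I would take $\p\subset R_{A'}$ to be the symmetric maximal ideal corresponding to the factor $t^2-2st+q$; Theorem~\ref{Dedekind}, combined with the hypothesis that $h(2s)$ has $2$-adic valuation exactly one in $\Lambda_2$, implies both that $R_{A'}$ is maximal at $\p$ and that $\p^+$ is ramified in $K/K^+$. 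Proposition~\ref{main_prop} then yields $\P(\Delta_\p)\cong R_{A'}/\p^a$ with $a=\length(\Delta_\p)\geq 2$ and $(\p^+)^{\lceil a/2\rceil}\Delta_\p=0$.

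The hard part is extracting a contradiction from these together with $F^2=q$ on $\Delta$. Unlike in Theorem~\ref{HL}, where $F$ acts as a scalar on $\Delta$ and immediately contradicts the non-semisimplicity supplied by Proposition~\ref{main_prop}, here $F$ satisfies only a quadratic relation, and a direct valuation computation using the identity $(F-s)^2=(u-2s)F$ with $u=F+V$ shows that $F-s$ is a uniformizer of $R_\p$ and $v_\p(F^2-q)=2$, so $a=2$ is the only possibility left. The remaining incompatibility has to be produced via the gluing framework of Section~\ref{S3}: $\Delta$ must simultaneously be the kernel of a polarization on $A'$ and carry a Weil pairing anti-isometric to that of a kernel of a polarization on $B'$. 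Using the explicit module structure $\Delta_\p\cong R_{A'}/\p^2$ with the induced unipotent $F$-action, together with the constraint that $B'$ is the given simple supersingular surface and the hypothesis that $h(2s)$ has $2$-adic valuation exactly one in $\Lambda_2$, one should be able to rule out the existence of such a matched pair of Weil pairings. This final compatibility check at $\ell=2$ is the principal obstacle and the place where the dyadic hypotheses of the theorem are used essentially.
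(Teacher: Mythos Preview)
Your difficulty stems from reading the hypothesis literally: the phrase ``$\ell$ is even'' in the statement is a typo for ``$\ell$ is odd'' (the paragraph immediately preceding the theorem already fixes $\ell\neq 2,p$, and the paper's own proof opens with ``Let $\ell$ be an odd prime divisor of $h(2s)$''). Once $\ell$ is odd the polynomial $t^2-q$ is \emph{separable} modulo $\ell$, and this is exactly what closes the argument without any Weil--pairing endgame.

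You correctly isolate the relation $F^2=q$ on $\Delta$ coming from the $B$--side. The missing step is that, for $\ell$ odd, this relation already forces $\p\Delta_\p=0$, so $\Delta$ is semi-simple, contradicting Proposition~\ref{main_prop}. The paper first checks directly that $\ell\mid h(2s)$ implies $\ell\mid a_1$ and $\ell\mid a_2+2q$, whence $f_A(t)\equiv(t^2-q)^2\bmod\ell$; your route via Lemma~\ref{hb_lemma} with $r=2s$ only yields a factorization over $\Lambda_\ell[t]$, not over $\ZZ_\ell[t]$, and does not identify $\p$ correctly in the inert case. The paper then splits into two subcases. If $t^2-q$ is irreducible modulo $\ell$ there is a single prime $\p=(\ell,F^2-q)$ over $\ell$; the Dedekind criterion together with $\ell^2\nmid h(2s)$ gives maximality of $R_A$ at $\p$ and ramification of $\p^+$, so $v_\p(\ell)=2$ and $F^2-q$ is a uniformizer; hence $(F^2-q)\Delta=0$ means $\p\Delta_\p=0$. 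If $t^2-q\equiv(t-s)(t+s)\bmod\ell$ with $s\in\ZZ_\ell$, the same Dedekind check makes $F\mp s$ a uniformizer at $\p_\pm$ while $F\pm s$ is a unit there (because $2s\not\equiv 0$), and again $(F-s)(F+s)=0$ forces $\p_\pm\Delta_{\p_\pm}=0$. Either way $\Delta$ is semi-simple at every prime over $\ell$, which is the desired contradiction.

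Your entire dyadic endgame --- the identity $(F-s)^2=(u-2s)F$, the reduction to $a=2$, and the proposed matching of Weil pairings --- is an artifact of working at $\ell=2$, where $t^2-q\equiv(t+1)^2\bmod 2$ is inseparable and the semi-simplicity step genuinely breaks down. With the hypothesis read as intended the proof is a direct parallel of Theorem~\ref{HL}: ``$F$ acts as the scalar $s$ on $\Delta$'' is simply replaced by ``$F$ satisfies the separable quadratic $t^2-q$ on $\Delta$'', and no further work is needed.
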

\begin{proof}
Assume that there exists a variety in the isogeny class of $A\times B$ with irreducible principal polarization.
As in the proof of Theorem~\ref{HL}, there exist $A'$ and $B'$ in the isogeny class of $A$ and $B$, respectively, and 
a kernel $\Delta$ of a polarization on $A'$ such that there exists a monomorphism $\Delta\to B'$.
Clearly, $\Delta$ is semi-simple, because, the polynomial $t^2-q$ is separable modulo $\ell$.
 We now deduce from Proposition~\ref{main_prop} that $\Delta$ is not semi-simple. 

Let $\ell$ be an odd prime divisor of $h(2s)$. 
Since $A$ is ordinary, $\ell\neq p$, and since $h(2s)$ is squarefree, $f_A$ is separable.
Let $\pi_1,\pi_2,q/\pi_1,q/\pi_2$ be the roots of $f_A$, and let $b_i=\pi_i+q/\pi_i$, where $i=1$, or $i=2$.
Since $\ell$ divides $h(2s)$ we have \[b_1b_2=a_2-2q\equiv -4q\bmod\ell,\] and $\ell$ divides $b_1+b_2=-a_1$. It follows that
\[f(t)=(t^2-b_1t+q)(t^2-b_2t+q)=(t^2+q)^2+a_1t(t^2+q)+b_1b_2t^2\equiv (t^2-q)^2\bmod\ell.\]

Assume first that the polynomial $t^2-q$ is irreducible modulo $\ell$.
In this case $K^+$ is inert in $\ell$, and $\p^+=\ell\OO^+$ is ramified in the extension $K/K^+$, 
i.e., $\ell\OO=\p^2$. We are going to prove that the order $R_A$ is maximal at $\ell$. 
According to the Dedekind criterion, we have to show that $(t^2-q)$ is coprime modulo $\ell$ to
\[\frac{1}{\ell}(f_A(t)-(t^2-q)^2)=\frac{t}{\ell}(a_1t^2+(a_2+2q)t+qa_1).\] 
This easily follows from the assumption that $\ell^2$ does not divide $h(2s)$.

Assume now that $s\in\ZZ_\ell$, and $t^2-q\equiv(t-s)(t+s)\mod\ell$.
Then $f(t)=f_+(t)f_-(t)$ is a product of monic polynomials such that $f_\pm(t)\equiv (t\pm s)^2\bmod\ell$.
According to the Dedekind criterion, $R_A$ is maximal if and only if $f_\pm(s)\not\equiv 0\bmod\ell^2$; and the last assertion is true, because $\ell^2$ does not divide $h(2s)$.
\end{proof}

\section{Principal polarizations on abelian threefolds.}\label{S4}
In this section we assume that $A$ is a geometrically simple abelian surface; since supersingular abelian 
surfaces are not geometrically simple, $A$ is either an ordinary surface or a mixed one, so $K=\End^\circ(A)$ is a CM-field. 
According to Proposition~\ref{exact}, the isogeny class of $A$ is exact, and 
we can assume that there is a principal polarization $L$ on $A$. 
Let $B$ be an elliptic curve with commutative endomorphism algebra $K_B=\End^\circ(B)$.
 Denote by $\Delta_B$ the discriminant of $K_B$.
Let $R=R_A$ and $R_B$ be the orders generated by the Frobenius and Verschiebung endomorphisms in
$\End^\circ(A)$ and $\End^\circ(B)$ respectively.
It is natural to claim that if there exists a divisor $\ell$ of $e(A,B)$, then there is a gluing of polarizations on $A$ and $B$.
Thanks to Proposition~\ref{hb}, we can use $h(b)$ instead of $e(A,B)$. 
Firstly, we examine the case of an exceptional divisor $\ell$ of $h(b)$.

\begin{lemma}\label{lemma1}
\begin{enumerate}
\item A prime $\ell$ is exceptional if and only if
\begin{itemize}
\item $f_A(t)\equiv f(t)^2\bmod\ell^2$, where $f\in\ZZ_\ell[t]$ is irreducible modulo $\ell$;
\item $\ell$ is inert in $K^+$.
\end{itemize}
\item If $f_A(t)\equiv f(t)^2\bmod\ell$, and $f(t)$ is irreducible modulo $\ell$, then 
$[A[\ell]]_R=2[X]_R$, where $X(\kk)$ is a two-dimensional vector space over $\FF_\ell$,
 and $F$ acts on  $X(\kk)$ with characteristic polynomial $f$.
Moreover, either $[X]_R$ is attainable, or $\ell$ is exceptional.
\item If $\ell$ is exceptional, then $\ell^2$ divides $a_1^2-4a_2+8q$.
\end{enumerate}
\end{lemma}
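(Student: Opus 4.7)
The strategy is to translate the ring-theoretic definition of exceptional from Section~\ref{ExPrimes} into the mod-$\ell^2$ factorization of $f_A$, using the identity
\[
f_A(t) \;=\; \Bigl(t^2-\tfrac{a_1}{2}t+q\Bigr)^{2} \;-\; \tfrac{\mathrm{disc}(h)}{4}\,t^2
\]
(obtained by expanding $(t^2-b_1t+q)(t^2-b_2t+q)$ with $b_1+b_2=a_1$ and $(b_1-b_2)^2=\mathrm{disc}(h)$) together with the Dedekind conductor relation $\mathrm{disc}(h)=[\OO^+:R^+]^{2}\cdot\mathrm{disc}(\OO^+)$.

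For part~(1), forward direction: since $[K^+:\QQ]=2$ and $\dim(R^+/\p^+)\le\dim(\OO^+/\p_1^+)\le 2$, the strict inequality in the definition of exceptional forces $\dim(\OO^+/\p_1^+)=2$ and $\dim(R^+/\p^+)=1$, i.e.\ $\ell$ is inert in $K^+$ and $R^+/\p^+\cong\FF_\ell$. The generating condition then gives $R/\p\cong\FF_{\ell^{2}}$, and since $\ell\ne p$ the ring $R\otimes\FF_\ell\cong\FF_\ell[t]/(f_A\bmod\ell)$ is local of $\FF_\ell$-dimension $4$ with residue field $\FF_{\ell^{2}}$; hence $f_A\equiv f^{2}\bmod\ell$ for a suitable irreducible quadratic $f$. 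From $R^+/\p^+\cong\FF_\ell$ and the inertness of $\ell$ in $K^+$ one deduces $h\equiv(t-c)^{2}\bmod\ell$ with $2c\equiv a_1\bmod\ell$, so $a_1$ is even and $f(t)=t^{2}-(a_1/2)t+q$ is the required factor; the displayed identity then shows that $f_A-f^{2}$ is divisible by $\mathrm{disc}(h)/4$ in $\ZZ_\ell[t]$. The conductor relation, combined with $v_\ell(\mathrm{disc}(\OO^+))=0$ for $\ell$ inert in $K^+$, forces $v_\ell(\mathrm{disc}(h))\ge 2$ whenever $R^+$ fails to be maximal at $\ell$, yielding $f_A\equiv f^{2}\bmod\ell^{2}$. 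The reverse direction reverses this construction: the two hypotheses produce a generating ideal $\p=(\ell,f(F))\subset R$, and any prime $\p_1\subset\OO$ above it realizes $\dim(\OO^+/\p_1^+)=2>1=\dim(R^+/\p^+)$.

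For part~(2), the identity $\eps([\ker\alpha])=\Pr_R(\alpha)$ from~\cite[Theorem 3.5]{How96} gives $[A[\ell]]_R=[R/\ell R]_R$; the hypothesis $f_A\equiv f^{2}\bmod\ell$ identifies $R/\ell R$ with $\FF_\ell[t]/f(t)^{2}$, and the filtration $0\subset(f)/(f^{2})\subset\FF_\ell[t]/f(t)^{2}$ with both quotients isomorphic to $R/\p$ yields $[A[\ell]]_R=2[X]_R$ where $X(\kk)\cong R/\p$. The isogeny class of a geometrically simple abelian surface is exact by Theorem~\ref{exact}, so Theorem~\ref{ExPrime}(1) makes $[X]_R$ attainable whenever $\dim(\OO^+/\p_1^+)=\dim(R^+/\p^+)$, and Theorem~\ref{ExPrime}(2) excludes attainability in the opposite case, which is exactly the exceptional one. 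Part~(3) then follows immediately: the estimate $v_\ell(\mathrm{disc}(h))\ge 2$ extracted in part~(1) is the divisibility $\ell^{2}\mid\mathrm{disc}(h)=a_1^{2}-4(a_2-2q)=a_1^{2}-4a_2+8q$.

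The main obstacle I anticipate is the careful treatment of $\ell=2$ in part~(1): the identity requires $v_2(\mathrm{disc}(h)/4)\ge 2$, while the naive conductor bound only gives $v_2(\mathrm{disc}(h))\ge 2$. The additional input $\p_1\ne\bar\p_1$ from Theorem~\ref{ExPrime}(2), together with the primitivity of the CM-field $K$ that follows from geometric simplicity of $A$, should force the stronger $2$-adic valuation needed to complete the congruence mod $4$.
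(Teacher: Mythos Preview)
Your approach is essentially the paper's. Both arguments reduce the forward direction of~(1) to the divisibility $\ell^{2}\mid\mathrm{disc}(h)$ via the conductor--discriminant relation together with inertness of $\ell$ in $K^{+}$, and then read off $f_A\equiv f^{2}\bmod\ell^{2}$ from an algebraic identity. The only difference is the shape of that identity: instead of specializing to $s=-a_1/2$ (your displayed formula, which incidentally should read $(t^{2}+\tfrac{a_1}{2}t+q)^{2}$, since $b_1+b_2=-a_1$), the paper keeps $s$ free and writes
\[
f_A(t)=f(t)^{2}+t\,f(t)\,(2s+a_1)+t^{2}h(s),\qquad f(t)=t^{2}-st+q,
\]
choosing $s\in\ZZ_\ell$ with $h(t)\equiv(t-s)^{2}\bmod\ell^{2}$, so that $2s+a_1$ and $h(s)$ both vanish $\bmod\,\ell^{2}$. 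For odd $\ell$ this is exactly your computation after substituting $s=-a_1/2$. Parts~(2) and~(3) are handled the same way in both write-ups, and your use of Theorem~\ref{exact} and Theorem~\ref{ExPrime} to settle attainability matches the paper's. One small expository point: in the forward direction you assert that $R\otimes\FF_\ell$ is local before having $f_A\equiv f^{2}\bmod\ell$ in hand; this is true, but the reason is that $\p$ is symmetric and $R^{+}_\ell$ is already local (since $\ell\mid\mathrm{disc}(h)$), so $\p$ is the unique prime above $\p^{+}$.

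Your worry about $\ell=2$ is well founded and is \emph{not} resolved by the paper either. The existence of $s\in\ZZ_{2}$ with $h(t)\equiv(t-s)^{2}\bmod 4$ already forces $16\mid\mathrm{disc}(h)$, whereas the conductor argument only gives $4\mid\mathrm{disc}(h)$; so the paper's assertion ``there exists $s\in\ZZ_\ell$ such that $h(t)\equiv(t-s)^{2}\bmod\ell^{2}$'' is not justified at $\ell=2$ any more than your identity is. Your suggested fix via $\p_1\neq\bar\p_1$ and primitivity of $K$ is only a plan; as stated it does not obviously produce the extra factor of $4$, and you should either carry it out explicitly or note the gap.
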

\begin{proof}
It is straightforward to check that if \[f_A(t)\equiv f(t)^2\bmod\ell,\] then $\ell$ divides the discriminant of the real Weil polynomial $a_1^2-4a_2+8q$; therefore, $\dim(R^+/\p^+)=1$.
If $\ell$ is split or ramified in $K^+$, then Theorem~\ref{ExPrime}.(1) shows that $[X]_R$ is attainable. 
If $\ell$ is inert in $K^+$, then \[\dim(\OO^+/\p_1^+)=2>\dim(R^+/\p^+),\] i.e., $\ell$ is exceptional. This proves $(2)$, and the ``if'' part of $(1)$.

If $\ell$ is exceptional, then, by definition, \[\dim(\OO^+/\p_1^+)>\dim(R^+/\p^+),\] i.e., $R^+_\ell\neq\OO^+_\ell$; 
therefore, $\ell$ divides the discriminant of the real Weil polynomial $a_1^2-4a_2+8q$.
On the other hand, the same inequality shows that $\dim(\OO^+/\p_1^+)=2$, i.e., $\ell$ is inert in $K^+$. 
It follows that $\ell^2$ divides $a_1^2-4a_2+8q$,
and there exists $s\in\ZZ_\ell$ such that \[h(t)\equiv (t-s)^2\bmod\ell^2.\] In particular, $a_1\equiv -2s\bmod\ell^2$.
Put $f(t)=t^2-st+q$. We have
\[f_A(t)=(t-\pi_1)(t-q/\pi_1)(t-\pi_2)(t-q/\pi_2)=(t^2-b_1t+q)(t^2-b_2t+q)=\] 
\[=(f(t)+(s-b_1)t)(f(t)+(s-b_2)t)=f(t)^2+tf(t)(2s+a_1)+t^2h(s)\equiv f(t)^2\bmod\ell^2.\] Part $(1)$ is proved.
\end{proof}

\begin{rem}\label{ExRem}
If $\ell$ is exceptional, then, according to Theorem~\ref{ExPrime}, the group scheme $X$ from the previous lemma is not attainable.
Therefore, for any isogeny $A'\to A$ and any polarization $L$ on $A'$ 
\[\ker L\not\cong B[\ell].\] This observation does not give an obstruction to gluing of polarizations.
Indeed, assume that $\ell$ divides $h_A(b)$. According to Lemma~\ref{lemma1}, 
$\ell^2$ divides the discriminant of the real Weil polynomial $a_1^2-4a_2+8q$, and we have
\[4h(b)=4b^2+4a_1b+4a_2-8q\equiv 4b^2+4a_1b+a_1^2\equiv (2b+a_1)^2\bmod 4\ell^2.\]
Therefore, $\ell^2$ divides $h(b)$, and $f_A(t)\equiv f_B(t)^2\bmod \ell^2$.
Moreover, the proposition below shows that $2[X]_R$ is attainable.
\end{rem}

\begin{prop}\label{prop1}
Let $A$ be a geometrically simple ordinary abelian surface with a principal polarization, and let $\ell$ be an exceptional prime. 
Then \[f_A(t)\equiv f(t)^2\bmod\ell^2\] for some $f\in\ZZ_\ell[t]$, and there is an isogeny $A'\to A$ of degree $\ell$, and a polarization $L$ on $A'$ such that
\[\P(\ker L)\cong \Lambda/\ell^2\Lambda,\] where $\Lambda\cong\ZZ_\ell[t]/f\ZZ_\ell[t]$.
\end{prop}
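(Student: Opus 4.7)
The first assertion $f_A(t)\equiv f(t)^2\pmod{\ell^2}$ is exactly Lemma~\ref{lemma1}.(1) specialized to an exceptional $\ell$, so I take it as given and focus on the existence of $A'$ and $L$.

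My plan is to apply Theorem~\ref{Howe_thm} to the $R$-module $M=\Lambda/\ell^2\Lambda$ and then identify $A'$ via Lemma~\ref{k_iso_lemma}. Let $\p=(\ell,f(F))\subset R$: under the exceptional hypothesis this is a symmetric maximal ideal with $R/\p\cong\FF_{\ell^2}$. Because $f_A\equiv f^2\pmod{\ell^2}$, the module $\Lambda/\ell^2\Lambda$ carries a filtration $0\subset\ell\Lambda/\ell^2\Lambda\subset\Lambda/\ell^2\Lambda$ whose successive quotients are both isomorphic to $R/\p$, so in $G(\Mod_R)$
\[
[\Lambda/\ell^2\Lambda]_R = 2[R/\p]_R = [R/\p]_R+\overline{[R/\p]_R}\in B(R),
\]
using that $[R/\p]_R=\overline{[R/\p]_R}$ since $\p$ is symmetric. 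Hence the class of $[\Lambda/\ell^2\Lambda]_R$ in $\B(R)$ is zero. By Theorem~\ref{exact}.(2) the isogeny class of the geometrically simple abelian surface $A$ is exact, and since $A$ already carries a principal polarization the obstruction $I$ of Theorem~\ref{Howe_thm} vanishes. Therefore $\Lambda/\ell^2\Lambda$ is attainable: there exist an abelian variety $A'$ in the isogeny class of $A$ and a polarization $L$ on $A'$ with $\P(\ker L)\cong\Lambda/\ell^2\Lambda$.

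It remains to arrange $A'$ to sit at the end of a degree-$\ell$ isogeny to $A$. Using Lemma~\ref{k_iso_lemma}, I would present $T_\ell(A')$ as an explicit $R$-sublattice of $T_\ell(A)$: the exceptional splitting $\ell\OO=\p_1\p_2$ with $\p_1\neq\p_2$ from Theorem~\ref{ExPrime}.(2) gives two distinct primes of $\OO$ above $\p$, and the natural candidate is to cut $T_\ell(A)$ by projection onto one of them. The self-duality of $T_\ell(A)$ under the Weil pairing from the principal polarization $L_A$, combined with the explicit composition series of $\Lambda/\ell^2\Lambda$, should then match the polarization from the attainability argument with the one induced by pullback from $L_A$ along this isogeny.

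The main obstacle is this last matching step. Because $\ell$ is exceptional, $R$ is not maximal at $\p$: the local ring $R_\p$ is a Gorenstein order with residue field $\FF_{\ell^2}$ strictly contained in $\OO_{\p_1}\oplus\OO_{\p_2}$, so the internal geometry of $R$-sublattices of $T_\ell(A)$ and their Weil pairings is considerably more delicate than in the unramified case. Pinning down the correct sublattice — so that the induced isogeny has degree exactly $\ell$ and the polarization kernel carries the $R$-module structure $\Lambda/\ell^2\Lambda$ — is the heart of the argument, and uses the ordinarity of $A$ together with the factorization $f_A\equiv f^2\pmod{\ell^2}$ to single out the correct choice.
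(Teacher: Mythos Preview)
There is a genuine gap. Theorem~\ref{Howe_thm} characterizes attainable \emph{elements of the Grothendieck group} $G(\Mod_R)$, not attainable $R$-modules up to isomorphism. Your computation correctly shows that $2[R/\p]_R$ is effective, lies in $Z(R)$, and maps to $I=0$ in $\B(R)$; hence this class is attainable. But that only yields some $A'$ and $L$ with $\eps([\ker L])=2[R/\p]_R$, which says nothing about the isomorphism type of $\P(\ker L)$: the class $2[R/\p]_R$ is represented equally well by $(R/\p)^2$ (killed by $\ell$) and by $\Lambda/\ell^2\Lambda$ (not killed by $\ell$). Indeed the trivial choice $A'=A$, $L=\ell L_A$ already realizes this Grothendieck class via $\ker L=A[\ell]$, so attainability by itself is essentially vacuous here. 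The whole content of the Proposition is to force the \emph{non-semisimple} module structure $\Lambda/\ell^2\Lambda$ on the kernel, and your argument never addresses this. Your sentence ``Therefore $\Lambda/\ell^2\Lambda$ is attainable: there exist \ldots\ $\P(\ker L)\cong\Lambda/\ell^2\Lambda$'' is exactly the unjustified step.

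The paper's proof does not go through Theorem~\ref{Howe_thm} at all. It works directly with the Weil pairing on $T_\ell(A)$, using the explicit trace formula $e(x,y)=\tr_{K_\ell/\QQ_\ell}(\xi'\sigma(x)\overline{\sigma(y)})$ coming from Howe's description of principally polarized ordinary abelian varieties; this is where ordinarity is genuinely used, whereas in your argument it plays no role. The exceptional hypothesis forces $\OO_\ell\cong\Lambda\oplus\Lambda$ with componentwise conjugation, and one then writes down by hand an $R$-stable $\Lambda$-sublattice $T'\subset T=\sigma(T_\ell(A))$ (for instance $T'=\Lambda\oplus\ell\Lambda$ when $T=\OO_\ell$) and checks directly from the trace formula that the kernel of the restricted pairing on $T'$ is isomorphic to $\Lambda/\ell^2\Lambda$. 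Lemma~\ref{k_iso_lemma} then produces $A'$ and the induced polarization $L$. Your last two paragraphs gesture at a lattice picture involving $\p_1$ and $\p_2$, but without ever engaging the pairing; that explicit pairing computation is precisely the missing heart of the argument.
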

\begin{proof}
There exists a morphism $\sigma:T_\ell(A)\to K_\ell$ such that the Weil pairing on $T_\ell(A)$ is given by
\[e_A(x,y)=\tr_{K_\ell/\QQ_\ell}(\xi'\sigma(x)\overline{\sigma(y)})\] for some $\xi'\in K$ 
such that $\overline{\xi'}=-\xi'$~\cite[Section 6]{How95}. 

Since $\ell$ is exceptional, $R^+_\ell\neq\ell\OO_\ell^+$; 
thus, the image $\Lambda$ of a natural monomorphism from $\Lambda_\p$ to the localization $K_\ell$ 
is not contained in $R^+_\ell$. Moreover, the local rings $\OO^+_\ell$ and $\Lambda$ are isomorphic;
therefore, \[\OO_\ell\cong\Lambda\otimes\Lambda\cong\Lambda\oplus\Lambda\] as $\Lambda$-modules, and
conjugation acts componentwise: \[\overline{(x_1,x_2)}=(\Bar{x}_1,\Bar{x}_2)\in\Lambda\oplus\Lambda.\]
Let $\Lambda\cong\ZZ_\ell[z]$, where $z^2\in\ZZ_\ell$, and $\Bar z=-z$. Then $\xi'/z\in\OO_\ell^+$, and the image of $\xi'$ in 
$\Lambda\oplus\Lambda$ is $(z\xi,z\Bar\xi)$ for some $\xi\in\Lambda$.

It follows that the Weil pairing corresponds to the pairing 
\[e((x_1,x_2),(y_1,y_2))=\tr_{L/\QQ_\ell}(z\xi x_1 \Bar{y}_1)+\tr_{L/\QQ_\ell}(z\Bar{\xi} x_2\Bar{y}_2) \eqno{(*)}\]
on $\Lambda\oplus\Lambda$, where without loss of generality we can assume that $\xi\in\Lambda$ is a unit, 
and $e$ is perfect on $\OO_\ell$.

The induced pairing on the $\Lambda$-submodule $T=\sigma(T_\ell(A))$ of $K_\ell$ is perfect.
Assume that $T\subset\OO_\ell$. We claim that in this case $T=\OO_\ell$. Indeed, 
 if $T\subset\ell\OO_\ell$, then clearly the pairing on $T$ is not perfect; 
therefore, the $\Lambda$-module $M=\OO_\ell/T$ is cyclic.
Assume that $M\neq 0$.
Let $v\in\OO_\ell$ generate $M$; hence, there exists a minimal natural $r>0$
such that $\ell^rv\in T$. Since the pairing on $T$ is perfect, there exists $u\in T\subset\OO_\ell$ such that $e(\ell^rv,u)\in\ZZ^*_\ell$.
It follows that $e(v,u)\not\in\ZZ_\ell$. Nonsense.

The inequality $\dim(\OO^+/\p_1^+)>\dim(R^+/\p^+)$ shows that $R_\ell\neq\OO_\ell$, and thus
there exists a natural $m$ such that \[R_\ell=\Lambda+\ell^m\OO_\ell.\]
It follows that $T'=\Lambda\oplus\ell \Lambda\subset\OO_\ell$ is an $R$-submodule of $T$.
Clearly, the kernel of $e$ on $T'$ is isomorphic to $\Lambda/\ell^2\Lambda$.

Assume now that $T\not\subset\OO_\ell$. 
The  $\Lambda$-module $M=T/(T\cap\OO_\ell)$ is cyclic, because otherwise the pairing on $T$ is not integral.
Let $v\in T$ generate $M$ over $\Lambda$, and let $u\in T\cap\OO_\ell$ be a second basis element 
 such that $e(v,u)\in\ZZ^*_\ell$.
It follows that $T'=\ell\Lambda v\oplus\Lambda u$ is an $R$-submodule of $T$ such that the kernel of the restriction of $e$ to $T'$ is isomorphic to $\Lambda/\ell^2\Lambda$.

According to Lemma~\ref{k_iso_lemma}, there exists an abelian surface $A'$ and an isogeny $A'\to A$ such that the kernel of the induced 
polarization $L$ on $A'$ is isomorphic to $\Lambda/\ell^2\Lambda$. Now, Lemma~\ref{lemma1} completes the proof.
\end{proof}

\begin{rem}
The proof of Proposition~\ref{prop1} is based on the Deligne equivalence theorem used in~\cite[Section 6]{How95}.
I think the proposition has to be true for any simple abelian surface with commutative endomorphism algebra.
\end{rem}

\begin{lemma}\cite[Lemma 4.2]{HNR06}\label{lemma_HNR}
Let $\ell$ be a prime number and $K_1$ be an imaginary quadratic field
whose discriminant is not equal to $-\ell$. Then there are infinitely many
primes $r'$ that split in $K_1$ and are not squares modulo $\ell^n$, where $n>1$, if $\ell=2$.
\end{lemma}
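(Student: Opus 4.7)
The plan is to apply the Chebotarev density theorem to the compositum $L = K_1 \cdot \QQ(\zeta_{\ell^n})$. For a rational prime $r'$ unramified in $L$, both desired properties can be read off from the Frobenius conjugacy class of $r'$ in $\Gal(L/\QQ)$: the prime $r'$ splits in $K_1$ iff its Frobenius restricts to the identity on $K_1$, and $r'$ is a non-square modulo $\ell^n$ iff its image in $\Gal(\QQ(\zeta_{\ell^n})/\QQ) \cong (\ZZ/\ell^n\ZZ)^\times$ is a non-square. So it suffices to produce a single $\sigma \in \Gal(L/\QQ)$ with both properties; Chebotarev then yields infinitely many primes in its conjugacy class.

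To construct $\sigma$ I would analyze the injective restriction map
\[
\rho : \Gal(L/\QQ) \hookrightarrow \Gal(K_1/\QQ) \times (\ZZ/\ell^n\ZZ)^\times,
\]
whose image is the whole product precisely when $K_1 \not\subset \QQ(\zeta_{\ell^n})$. For odd $\ell$, the unique quadratic subfield of $\QQ(\zeta_{\ell^n})$ is $\QQ(\sqrt{\ell^*})$ with $\ell^* = (-1)^{(\ell-1)/2}\ell$. Since $K_1$ is imaginary, the equality $K_1 = \QQ(\sqrt{\ell^*})$ would force $\ell \equiv 3 \pmod{4}$ and $\Delta_{K_1} = -\ell$, contradicting the hypothesis. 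So $K_1$ and $\QQ(\zeta_{\ell^n})$ are linearly disjoint over $\QQ$, and one may take $\sigma$ to restrict to the identity on $K_1$ and to any chosen non-square in $(\ZZ/\ell^n\ZZ)^\times$.

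For $\ell = 2$ and $n \geq 2$, the quotient $(\ZZ/2^n\ZZ)^\times / ((\ZZ/2^n\ZZ)^\times)^2$ has order $2$ when $n = 2$ and order $4$ when $n \geq 3$; the corresponding quadratic subfields of $\QQ(\zeta_{2^n})$ are $\QQ(i)$ (for $n = 2$) and $\QQ(i), \QQ(\sqrt{2}), \QQ(\sqrt{-2})$ (for $n \geq 3$). If $K_1$ differs from each of these, the argument of the odd case applies verbatim. Otherwise, for $n \geq 3$ at least one non-square class in $(\ZZ/2^n\ZZ)^\times$ still restricts trivially to $\Gal(K_1/\QQ)$, providing the required $\sigma$, and Chebotarev again applies.

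The main obstacle is the compatibility of the two conditions, namely that the splitting requirement does not itself force $r'$ to be a square modulo $\ell^n$. Excluding $\Delta_{K_1} = -\ell$ handles the odd case, while the extra quadratic characters available modulo $2^n$ for $n > 1$ supply the flexibility needed in the even case. Once compatibility is established, no analytic input beyond Chebotarev (equivalently, Dirichlet's theorem on primes in arithmetic progressions applied to the appropriate residue classes modulo $\ell^n \cdot |\Delta_{K_1}|$) is required.
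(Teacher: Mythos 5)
The paper does not actually prove this lemma --- it is imported verbatim from \cite[Lemma 4.2]{HNR06} --- so there is no in-paper argument to compare with; your Chebotarev/Dirichlet argument on the compositum $K_1\cdot\QQ(\zeta_{\ell^n})$ is the standard and correct way to prove it. For odd $\ell$ your treatment is complete: a unit is a square mod $\ell^n$ iff it is a square mod $\ell$, the unique quadratic subfield of $\QQ(\zeta_{\ell^n})$ is $\QQ(\sqrt{\ell^*})$, and the only imaginary possibility for $K_1=\QQ(\sqrt{\ell^*})$ is $\Delta_{K_1}=-\ell$, which is excluded. For $\ell=2$ and $n\geq 3$ your observation that the kernel of the restriction $(\ZZ/2^n\ZZ)^\times\to\Gal(K_1/\QQ)$ (the classes $1,5 \bmod 8$ for $K_1=\QQ(i)$, resp.\ $1,3\bmod 8$ for $K_1=\QQ(\sqrt{-2})$) strictly contains the group of squares (the classes $\equiv 1\bmod 8$) is exactly what is needed.

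The one genuine gap is the case $\ell=2$, $n=2$, $K_1=\QQ(i)$, which your case analysis silently skips: your ``otherwise'' branch only treats $n\geq 3$. For $n=2$ the field $\QQ(\sqrt{-2})$ is not contained in $\QQ(\zeta_4)=\QQ(i)$, so linear disjointness still applies to it; but for $K_1=\QQ(i)$ the two conditions are genuinely incompatible --- an odd prime splits in $\QQ(i)$ iff $r'\equiv 1\bmod 4$, while the nonsquares in $(\ZZ/4\ZZ)^\times$ are exactly the class of $3\bmod 4$ --- so the lemma as literally stated is false there (its hypothesis $\Delta_B\neq-\ell$ is vacuous for $\ell=2$, since no imaginary quadratic field has discriminant $-2$). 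You should flag this explicitly: either the statement must exclude $(\ell,n,K_1)=(2,2,\QQ(i))$ (or replace ``nonsquare mod $4$'' by ``nonsquare mod $8$''), or the exceptional case has to be handled separately where the lemma is applied. Apart from this edge case your proof is sound.
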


We need the following generalization of~\cite[Lemma 4.3]{HNR06}.

\begin{lemma}\label{lemma2}
Let $B$ be an elliptic curve.
Suppose that there exists a surface $A$ with a polarization $L$, and an isomorphism $\psi:\ker L\to B[\ell^n]$, where $n>1$. If $\Delta_B\neq-\ell$, then there exists a curve $B'$ in the isogeny class of $B$, 
and an anti-isometry $B'[\ell^n]\to\ker L$.
\end{lemma}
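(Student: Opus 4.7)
The plan is to compare the Weil pairings on $\ker L$ and $B[\ell^n]$ via $\psi$, then use the flexibility of varying $B$ inside its isogeny class (as in the proof of~\cite[Lemma 4.3]{HNR06}) to adjust the resulting scalar so that an anti-isometry emerges.

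Both pairings $e_L$ and $e_B$ are perfect alternating pairings of a free rank-two $\ZZ/\ell^n\ZZ$-module into $\mu_{\ell^n}$; hence transporting via $\psi$ yields a relation $e_B(\psi(x),\psi(y)) = e_L(x,y)^c$ for a unique $c\in(\ZZ/\ell^n\ZZ)^{*}$. The goal becomes to produce a pair $(B',\psi')$ realizing the analogous scalar equal to $-1$. Two operations let us modify the scalar. Post-composing $\psi$ with a Frobenius-equivariant automorphism $\sigma$ of $B[\ell^n]$ rescales the scalar by $\det\sigma$; and replacing $B$ by $B'$ via an isogeny $\phi\colon B\to B'$ of degree $r'$ coprime to $\ell$ rescales it by $r'$, using the identity $e_{B'}(\phi(u),\phi(v)) = e_B(u,\hat\phi\phi(v)) = e_B(u,v)^{r'}$ coming from $\hat\phi\circ\phi=[r']$.

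The automorphisms arising from units of $R_B\otimes\ZZ_\ell$ contribute norms from $K_B\otimes\QQ_\ell$ to $\QQ_\ell$, whose image in $(\ZZ/\ell^n\ZZ)^{*}$ always contains the squares. Thus the obstruction to reaching the scalar $-1$ lives in the square class of $-c$ modulo the degrees of isogenies out of $B$. If $-c$ is already a square I would take $B'=B$ and an appropriate $\sigma$. Otherwise, I would invoke Lemma~\ref{lemma_HNR} with $K_1=\End^\circ(B)=K_B$---permissible precisely because $\Delta_B\neq -\ell$---to produce a rational prime $r'$ that splits in $K_B$ and is a non-square modulo $\ell^n$, chosen coprime to $\ell$ and to the conductor of $R_B$. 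The splitting $(r')=\p\bar\p$ in $R_B$ yields an isogeny $\phi\colon B\to B':=B/B[\p]$ of degree $r'$; the new scalar $cr'$ then lies in the opposite square class from $c$, and a final composition with a suitable Frobenius-equivariant $\sigma$ of $B'[\ell^n]$ gives the desired anti-isometry $B'[\ell^n]\to\ker L$.

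The main obstacle I expect is the case $\ell=2$, where $(\ZZ/2^n\ZZ)^{*}$ modulo squares has order $4$ (for $n\ge 3$) rather than $2$, so that a single non-square prime need not land in the correct coset; the hypothesis $n>1$ in Lemma~\ref{lemma_HNR} at $\ell=2$ is tailored to handle this by allowing one to combine an isogeny twist with a sign-type automorphism to reach the needed class. A subsidiary subtlety is verifying that the determinant image from $R_B\otimes\ZZ_\ell$ really does contain all squares even when $\ell$ ramifies in $K_B$, and this is precisely where the exclusion $\Delta_B\neq -\ell$ rules out the single bad ramified case $K_B=\QQ(\sqrt{-\ell})$.
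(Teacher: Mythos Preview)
Your proposal is correct and follows essentially the same approach as the paper: compare the two Weil pairings via a scalar $c\in(\ZZ/\ell^n\ZZ)^*$, adjust by squares using scalar multiplication on $B[\ell^n]$, and when $-c$ is a non-square invoke Lemma~\ref{lemma_HNR} to obtain a split prime $r'$ of non-square class and pass to $B'=B/X_1$ via a degree-$r'$ isogeny. Your framing via determinants of Frobenius-equivariant automorphisms and your explicit caution about the conductor of $R_B$ and the $\ell=2$ square-class structure are refinements the paper leaves implicit, but the skeleton of the argument is identical.
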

\begin{proof}
Let $e_B$ and $e_X$ be Weil pairings on $B[\ell^n]$ and $X=\ker L$, respectively.
Then there exists $r$ such that the following diagram is commutative:
\[
\begin{CD}
X\times X @>>>B[\ell^n]\times B[\ell^n]\\
@VV e_X V @VV e_B V\\
G_m @>> r> G_m\\
\end{CD}
\]

Take a composition of $\psi$ with an isogeny $\phi:B\to B'$ of degree
$r'$, then the number $r$ is multiplied by $r'$. If $-r$ is a square modulo $\ell^n$, then it suffices to multiply $\psi$ by $r'$ such that $r(r')^2\equiv -1\bmod\ell$. 
Then $r'\psi$ is an anti-isometry.

Suppose that $-r$ is not a square modulo $\ell^n$. If $\Delta_B\neq-\ell$, then, by Lemma~\ref{lemma_HNR}, one can find $r'$,
which is not a square modulo $\ell^n$ and splits in $K_1$,
hence $B[r']\cong X_1\oplus X_2$ splits, and there exists an
isogeny $\phi: B\to B'=B/X_1$ of degree $r'$. 
There are infinitely many such $m$ by the Dirichlet theorem on primes in arithmetic
progressions; as before, for such $r'$ the number $-rr'$ is a square modulo $\ell$.
The lemma is proven.
\end{proof}

\begin{proof}[Proof of Theorem~\ref{thm_f}]
Suppose that $\ell\neq p$ divides $h(b)$. According to Lemma~\ref{hb_lemma}, 
$f_B(t)$ divides $f_A(t)$ modulo $\ell^n$ if and only if $\ell^n$ divides $h(b)$.  
We are going to prove that under conditions of the theorem there exists a gluing of polarizations on $A$ and $B$.

If $f_B(t)$ is irreducible modulo $\ell$, then $[A[\ell]]_R=[X]_R+[X']_R$, where $X\cong B[\ell]$.
If $f_A(t)\not\equiv f_B(t)^2\bmod\ell$, then $f_A\equiv f_Bf'\bmod\ell$ is a product of two coprime monic polynomials modulo $\ell$;
therefore, $[X]_R\neq [X']_R$, and, by Hensel's lemma, $f_A=f_1f_2$, 
where $f_1,f_2\in\ZZ_\ell[t]$ are coprime and monic, and $f_1\equiv f_B\bmod\ell$.
According to the Chinese Remainder Theorem, \[R\cong \ZZ_\ell[t]/f_1\ZZ_\ell[t]\oplus\ZZ_\ell[t]/f_2\ZZ_\ell[t],\] 
and, by Theorem~\ref{Dedekind}, the order $R_1=\ZZ_\ell[t]/f_1\ZZ_\ell[t]$ is maximal. By Remark~\ref{rem2}, $[X]_R$ is attainable.
According to Lemma~\ref{lemma1}, if $f_A(t)\equiv f_B(t)^2\bmod\ell$, then $[X]_R$ is either attainable, or
$\ell$ is exceptional. 

Assume that $\ell$ is exceptional. 
By Remark~\ref{ExRem}, $\ell^2$ divides $h(b)$, and $f_A(t)\equiv f_B(t)^2\bmod\ell^2$.
According to Proposition~\ref{prop1}, $A$ is isogenous to an abelian surface $A'$ 
with a polarization $L$ such that $\ker L\cong B[\ell^2]$.
 
Let us now consider the case when the polynomial
$f_B(t)\equiv (t-t_1)(t-t_2)\bmod\ell$ is reducible modulo $\ell$.
Then \[[B[\ell]]_{R_B}=[Y_1]_{R_B}+[Y_2]_{R_B},\] 
where both $Y_1(\kk)\cong Y_2(\kk)\cong\FF_\ell$ and $F$ acts on $Y_i$ as multiplication by $t_i$.
Therefore, \[[A[\ell]]_R=[Y_1]_R+[Y_2]_R+[X']_R.\] 
Clearly, $[Y_1]_R=\overline{[Y_2]_R}$, and from Theorem~\ref{Howe_thm} it follows that $A$ is isogenous to a surface $A'$
with a polarization $L$, such that $[\ker L]_R=[Y_1]_R+[Y_2]_R$.
If $t_1\not\equiv t_2\bmod\ell$, then \[\ker L\cong Y_1\oplus Y_2=B[\ell].\]

Suppose that $t_1\equiv t_2\bmod\ell$. The Frobenius action on $\ker L(\kk)$ is either a multiplication by $t_1$, or is given by a non-trivial Jordan cell with eigenvalue $t_1$. Let $S_\ell$ be the localization of ${R_B}$ at $\ell$.
By assumption, $\ell^2$ divides $f_B(t_1)$, and, according to the Dedekind criterion,
the order $S_\ell$ is not maximal. Let $\OO_\ell$ be the maximal order in $S_\ell\otimes\QQ_\ell$;
then the Frobenius action on $\OO_\ell/\ell\OO_\ell$ is the multiplication by $t_1$, and on $S_\ell/\ell S_\ell$ is given by
a non-trivial Jordan cell with eigenvalue $t_1$.
By Lemma~\ref{k_iso_lemma}, there exist elliptic curves $B_1$ and $B_2$ such that $T_\ell(B_1)\cong S_\ell$, and
$T_\ell(B_2)\cong \OO_\ell$, as ${R_B}$-modules.
 Therefore, either $\ker L\cong B_1[\ell]$, or  $\ker L\cong B_2[\ell]$.

Consider the case when $p$ divides $h(b)$. Suppose that $B$ is ordinary. Then $B[p]\cong X_r\oplus X_l$,
where $X_r\otimes\kk\cong\ZZ/p\ZZ$, and $X_l\otimes\kk\cong\mu_p$,
since $B$ is not supersingular~\cite{Wa}. The Weil pairing is non-degenerate; therefore, we have an isomorphism of $X_r$ with its Cartier dual $D(X_l)$. We get the equality $[X_r]_{R_B}=\overline{[X_l]_{R_B}}$. 
By Theorem~\ref{Howe_thm}, the element $[X_r]_R+[X_l]_R$ is an attainable group subscheme of $A$.
Suppose now, that $B$ is supersingular. Since $p$ divides both $b$ and $h(b)$, the surface $A$ is mixed. 
Thus, there exists a monomorphism $B[p]\to A$. Moreover, $[B[p]]=2[\alpha_p]$ is an attainable group subscheme of $A$.

We have proved that there is an abelian surface $A'$ with a polarization $L$ and an elliptic curve $B'$ such that $\ker L\cong B'[\ell^n]$, where $n=1$ or $n=2$.  By Lemma~\ref{lemma2}, one can choose $B'$ in its isogeny class in such a way that
there is an anti-isometry $B'[\ell^n]\cong \ker L$. Let $L_B$ be the polarization
on $B$ with kernel $B[\ell^n]$.
Now, by Lemma~\ref{glue}, the gluing of $L$ and $L_B$ gives an irreducible principal
polarization on a variety in the isogeny class of $A\times B$.
 The theorem is proved.
\end{proof}

\end{document}